\newtheorem{theorem}{theorem}[section]
\newtheorem{lemma}[theorem]{Lemma}
\newtheorem{prp}[theorem]{Proposition}
\newtheorem{thm}[theorem]{Theorem}
\newtheorem{conj}[theorem]{Conjecture}
\theoremstyle{definition}
\newtheorem{dfn}[theorem]{Definition}
\newtheorem{xmp}[theorem]{Example}
\begin{document}

% \title[short text for running head]{full title}

%    Only \author and \address are required; other information is
%    optional.  Remove any unused author tags.

%    author one information
% \author[short version for running head]{name for top of paper}
\author{Alexander Prähauser}
\title{The Category of Twisted Extensions of a Vertex Operator Algebra and its Cohomology}

\date{}

\maketitle

\begin{abstract}
The monoidal category of twisted modules of a Vertex Operator Algebra $V$ is defined and reduced to its 2-group of invertible objects $G_\alpha$, which can be described by a 3-cocycle $\alpha$ on its 0-truncation $G$ with values in the group of units $A$ of the field of definition of $V$ serving as its associator. This cocycle also presents the classifying morphism of an $\infty$-group extension of $G$ by the delooping $BA$. Motivated by this, it is proven that the $\infty$-group extension classified by a 3-cocycle $\alpha$ is presented by the skeletal 2-group $G_\alpha$ with associator $\alpha$. The results are discussed in light of current developments in Moonshine and $(\infty,1)$-topos theory.
\end{abstract}

\tableofcontents

\section{Acknowledgements}
I would like to thank my parents, my supervisor Alexander Bors, my second examiner Goulnara Arzhantseva and my friend Daniel Scherl, who helped me several times at a crucial stage of the proof.

\section{Terminology}
The terminology in this work makes a conscious effort to stay simplistic and logical. Composition is denoted in diagrammatic order, so that the composition of two morphisms $f:A\rightarrow B,g: B\rightarrow C$ is given by $f;g$ instead of $g\circ f$. Names of structures are generally given in simple capital letters, even if these structures inhabit a higher-categorical level. The reasoning behind this is that there is no possibility yet to indicate the categorical level uniformly through font, since this would require a family of fonts $\mathcal{F}_n$ such that the level $n$ is discernable from $\mathcal{F}_n$. Terminology of categorical constructions is generally taken from the nlab \cite{Nlab} (as are the definitions, at least in rough terms). Thus, what is called a strong monoidal functor here is called a weak monoidal functor in some other places and not to be confused with a strict monoidal functor, which is sometimes called strong. Lax and colax variants of the definitions are not mentioned.

\section{Introduction}
Given a group $G$ with a normal subgroup $H$, $G$ can be expressed as an extension of $G/H$ by $H$. Thus, the study of finite groups can be broken down to the study of finite simple groups and their cohomology. This motivated the classification of finite simple groups, the largest collaborative project in pure mathematics to date, which was finally concluded in 2004 with the following result: \\

\begin{thm}[{\cite{FSG}}]
Every finite simple group is isomorphic to either
\begin{enumerate}
    \item a cyclic group,
    \item an alternating group of degree $\geqslant$ 5,
    \item a group of Lie type
\end{enumerate}
or one of 27 sporadic groups\footnote{Where the Tits group is included.}. 
\end{thm}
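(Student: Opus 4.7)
The plan is to argue by contradiction on the order of a minimal counterexample $G$: a finite simple group of smallest order not appearing on the list. Every proper subgroup and section of $G$ is then a $\mathcal{K}$-group (all its composition factors lie on the list), which supplies a strong inductive hypothesis. The strategy is to extract enough structural information from the local subgroups of $G$ — chiefly the centralizers of involutions and of elements of odd prime order — to force $G$ onto one of the families already listed.

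First, I would invoke the Feit-Thompson odd-order theorem to conclude that $G$ has even order, so that an involution $t \in G$ exists; the Brauer-Fowler inequality then bounds $|G|$ in terms of $|C_G(t)|$ and reduces the task to classifying the possible isomorphism types of involution centralizers. The next step is the Gorenstein dichotomy: $G$ is either of \emph{component type}, meaning some involution centralizer has a nontrivial $2$-layer, or of \emph{characteristic $2$ type}, meaning $F^{\ast}(M) = O_2(M)$ for every $2$-local subgroup $M$. This cleaves the analysis into two essentially disjoint programs, and the remainder of the argument proceeds branch by branch.

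In the component case, signalizer functor theory combined with the $B$-theorem and Aschbacher's component theorem produces a standard component $L$ in the centralizer of some involution; the inductive hypothesis identifies $L$ up to isomorphism, after which a centralizer-of-involution recognition theorem in the tradition of Brauer, Janko and Aschbacher-Seitz pins down $G$ itself. In the characteristic $2$ case, one further splits by the $2$-local $p$-rank for odd primes $p$: the large-rank regime falls to the Gorenstein-Lyons trichotomy theorem together with the uniqueness case, which manufactures either a classical involution or a strongly $p$-embedded subgroup; the small-rank regime is the quasithin case, to be attacked by the Meierfrankenfeld-Stellmacher-Stroth amalgam method. In both branches the $27$ sporadic groups appear as the targets of explicit recognition theorems built from their $2$- and $3$-local fusion patterns.

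The main obstacle is the quasithin case — groups of characteristic $2$ type whose $2$-local $p$-rank is at most $2$ for every odd $p$. This is the regime in which the amalgam method must deliver uniform control over a pair $(P_1, P_2)$ of maximal $2$-local subgroups sharing a Sylow $2$-subgroup, and it is where the classification stalled for roughly a quarter-century before Aschbacher and Smith completed it in 2004. Any realistic execution of this plan must budget the overwhelming majority of its effort — and of its tens of thousands of pages — for this step, and must also verify that each sporadic group on the list actually arises from, and is uniquely determined by, the local data produced in the preceding steps.
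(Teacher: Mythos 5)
The paper offers no proof of this statement at all: it is quoted verbatim from the literature with the citation \cite{FSG}, so there is nothing internal to compare your argument against. What you have written is an accurate architectural summary of how that cited proof actually goes --- minimal counterexample with the $\mathcal{K}$-group inductive hypothesis, Feit--Thompson to get an involution, Brauer--Fowler to reduce to involution centralizers, the dichotomy between component type and characteristic $2$ type, signalizer functors and the $B$-theorem feeding Aschbacher's component theorem on one branch, the trichotomy and uniqueness theorems on the other, and the quasithin case as the final obstruction removed by Aschbacher and Smith in 2004. You are also right to say explicitly that this is a roadmap rather than a proof: every sentence of your second and third paragraphs names a theorem whose proof occupies hundreds or thousands of pages, and no outline at this granularity can be ``checked'' in the usual sense.

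Two small corrections. First, the quasithin case as Aschbacher and Smith actually completed it leans heavily on weak-closure and pushing-up arguments; the amalgam method in the style you name is more properly associated with the separate Meierfrankenfeld--Stellmacher--Stroth program for a third-generation proof of the characteristic-$p$ case, so the attribution in your final paragraph conflates two distinct lines of attack. Second, the count of $27$ sporadic groups only matches the statement under the paper's explicit convention (see its footnote) of counting the Tits group as sporadic rather than as the derived group of a group of Lie type; the usual count is $26$. Neither point affects the overall correctness of your outline as a description of the published classification.
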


While this result is very satisfying in some regards, it opens up new questions. In particular, the appearance of the 27 sporadic groups is mystifying, as they do not seem to belong to some larger structure and were often only constructed for the classification process. Are they just instances of the law of small numbers\footnote{This observation by Richard K. Guy states that ``There aren't enough small numbers to meet the many demands made of them.''. A consequence of this law is that exceptional structures get rarer as their cardinality increases.}? According to David Corfield, Gelfand didn't think so: \\

``Sporadic simple groups are not groups, they are objects from a still unknown infinite family, some number of which happened to be groups, just by chance.'' \cite{Cor} \\

Whether this is true or not, ample evidence has mounted that the sporadic groups are essential structures in their own right. For one, they are interrelated in a variety of ways. In particular, all but seven (including the Tits group) of them are subquotients of the largest sporadic group, the monster group $M$ (the happy family) and all but two of them have orders divisible only by primes appearing in the order of the monster group. Even more strikingly, a mysterious connection was found between the monster group (and subsequently some of its subquotients) and modular functions. This connection was seen as so mysterious it has been named $Moonshine$ and it took much construction work to even provide an exact formulation for it. Nowadays, Moonshine is formally expressed using Vertex Operator Algebras (henceforth abbreviated VOAs). In the next section, we define these sophisticated algebraic structures and explain some of the bare fundamentals of Moonshine. After that, we shift our attention to the categories of their representations. These turn out to be reducible to the 2-group of their simple elements, which can be described through the group of automorphisms of the VOA and a 3-cocycle on it. Finally, we provide a more conceptual description of this cocycle by showing it to be a representative of the classifying morphism of the 2-group in Theorem \ref{main}. This is the main result of the current text and applicable to any 2-group that is described by an associator on a group. We also discuss some consequences and provide definitions of the category-theoretic notions we are using in the appendix.

\section{Vertex Operator Algebras and Moonshine}

Several definitions of a VOA can be given. We give the original definition, which, though unenlightning in some regards, allows us to see the action of its automorphism group most directly.

\begin{dfn}
A \emph{(complex) vertex operator algebra} $V$ is a $\mathbb{Z}$-graded vector space $\sum_{n\in \mathbb{Z}} V_n$ with an assignment $Y\colon  V\rightarrow End(V)[[z,z^{-1}]]:u\mapsto Y(u, z)=\sum_{n\in \mathbb{Z}} u_{(n)} z^{-n-1}$ from $V$ to the ring of Laurent series of endomorphisms of $V$, and distinguished elements $1\in V_0$, $\omega\in V_2$ such that

\begin{itemize}
    \item Each $V_n$ is finite-dimensional,
    \item For each $u\in V_k$, $u_{(n)}$ is a linear map from $V_l$ to $V_{k+l-n-1}$;
    \item $Y(1, z)$ is the identity,
    \item $Y(u, 0)1=u$,
    \item  the \emph{Jacobi identity} holds
    \begin{align*}
        &z_0^{-1}\delta\left(\frac{z_1-z_2}{z_0}\right)  Y(u, z_1) Y(v, z_2) - z_0^{-1}\delta\left(\frac{z_2-z_1}{-z_0}\right)Y(v, z_2) Y(u, z_1 ) = \\
        &z_2^{-1}\delta\left(\frac{z_1-z_0}{z_2}\right)Y(Y(u, z_0)v, z_2),
    \end{align*}
    \item For all $u,v$ in $V$ there is an $N=N(u, v)$ such that $u_{(n)}v=0$ for all $n\geq N$,
    \item The operators $L_n=\omega_{(n+1)}$ span a copy of the Virasoro algebra whose central term acts on $V$ as a scalar multiple $c\cdot id_V$,
    \item $L_0 v = nv$ for $v\in V_n$,
    \item $Y(L_{-1} v, z)=\partial_z Y(v,z)$,
    
\end{itemize}

A \emph{morphism of VOAs} is given by a linear map $\phi:V\rightarrow W$ such that

$$\phi(Y(u,z)v)=Y(\alpha(u),z)\alpha(v)$$

as an equality of power series, and $\phi(1)=1$ and $\phi(\omega)=\omega$.
\end{dfn}

Of central importance to us is the interaction between a VOA and its automorphism group. In particular, we can give spectral decompositions for VOAs:

\begin{prp}\label{4.2}
Given a locally finite automorphism $g\in Aut(V)$ of finite order $N$, $V$ can be decomposed into eigenspaces $V^j=\{v\in V \mid gv=\zeta^{j}_N v\}$, where $\zeta_N$ is the $N$th root of unity.
\end{prp}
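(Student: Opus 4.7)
The plan is to build explicit idempotent projectors onto each putative eigenspace using a discrete Fourier transform of the powers of $g$, and then read off the decomposition from their orthogonality and completeness. Since $g$ has finite order $N$ as an element of $\mathrm{Aut}(V)$, we have $g^N = \mathrm{id}_V$, so $g$ satisfies the polynomial $x^N - 1 = \prod_{j=0}^{N-1}(x - \zeta_N^j)$, which over $\mathbb{C}$ splits into distinct linear factors. Heuristically, this already forces $g$ to be diagonalizable, and the projectors will make this diagonalization effective.

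Concretely, for each $j \in \{0, 1, \ldots, N-1\}$ I would set
$$\pi_j \;=\; \frac{1}{N}\sum_{k=0}^{N-1} \zeta_N^{-jk}\, g^k \;\in\; \mathrm{End}(V).$$
The local finiteness hypothesis ensures this finite sum of powers of $g$ is a well-defined operator on $V$, and each $\pi_j$ is $g$-equivariant in the sense that $g\pi_j = \zeta_N^j \pi_j$, which will show $\mathrm{im}(\pi_j) \subseteq V^j$.

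The next step is to verify the two character-orthogonality identities $\pi_j \pi_l = \delta_{jl}\pi_j$ and $\sum_{j=0}^{N-1} \pi_j = \mathrm{id}_V$, both of which reduce to the elementary relation $\frac{1}{N}\sum_{k=0}^{N-1} \zeta_N^{(l-j)k} = \delta_{jl}$ for characters of $\mathbb{Z}/N\mathbb{Z}$. From these, every $v \in V$ admits a unique decomposition $v = \sum_j \pi_j(v)$ with $\pi_j(v) \in V^j$, giving $V = \bigoplus_{j=0}^{N-1} V^j$ as required.

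There is no real obstacle; the argument is just the standard proof that a representation of a finite cyclic group over an algebraically closed field of characteristic zero splits into isotypic components, and local finiteness is exactly the mild condition needed to make the character-projection formula meaningful operator-by-operator in the infinite-dimensional graded setting of a VOA. The only point that could conceivably require care is checking that the $\pi_j$ respect the grading and whatever additional structure of $V$ one wants to preserve, but since $g$ is assumed to be a VOA automorphism and $\pi_j$ is a polynomial in $g$ with scalar coefficients, this is automatic.
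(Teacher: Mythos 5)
Your proof is correct, and it actually takes a more substantive route than the paper does. The paper's own argument is a one-line observation: since $g$ has order $N$, the subgroup $\langle g\rangle$ is cyclic of order $N$, so the eigenvalues of $g$ must be $N$-th roots of unity. That identifies the \emph{possible} eigenvalues but does not by itself establish that $V$ is the direct sum of the eigenspaces $V^j$, i.e.\ that $g$ is diagonalizable; your discrete-Fourier projectors $\pi_j = \frac{1}{N}\sum_{k}\zeta_N^{-jk}g^k$, together with the orthogonality and completeness identities, supply exactly that missing step and make the decomposition effective. (Equivalently, one could note that $g$ satisfies the separable polynomial $x^N-1$ over $\mathbb{C}$, which is the non-constructive version of your argument.) One small quibble: local finiteness is not what makes $\pi_j$ well defined --- a finite linear combination of endomorphisms always is --- and once $g^N=\mathrm{id}_V$ is granted the whole argument goes through on all of $V$ without it; the hypothesis is really there because in the VOA setting one often works with automorphisms that are only locally of finite order. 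This does not affect the validity of your proof.
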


\begin{proof}
Since $g$ is of order $N$, the subgroup generated by it is isomorphic to the cyclic group $C_N$ of order $N$. Thus its eigenvalues have to be from the group of $N$-th square roots.
\end{proof}

We are mostly interested in the modules of VOAs:

\begin{dfn} \cite{C2}\cite{MBM}
A \emph{weak module} of a VOA $V$ is given by a vector space $M$ equipped with a linear map 

$$Y_M:V\rightarrow End(M)[[z,z^{-1}]]$$
$$v\mapsto Y_M(v,z)=\sum_{n\in\mathbb{Z}} v_nz^{-n-1}, v_n\in End(M)$$

such that

\begin{enumerate}
    \item $v_n m=0$ for all $n\geq n_0(m)$,
    \item $Y_M(1,z)=id_M$,
    \item The Jacobi identity holds:
    \begin{align*}
           &z^{-1}\delta\left(\frac{z_1-z_2}{z_0}\right)Y_M(u,z_1)Y_M(v,z_2) - z_0^{-1}\delta\left(\frac{z_2-z_1}{-z_0}\right)Y_M(v,z_2)Y(u,z_1)= \\
           &z_2^{-1}\delta\left(\frac{z_1-z_0}{z_2}\right) Y_M(Y(u,z_2)v,z_2).
    \end{align*}

\end{enumerate}
A \emph{weak $g$-twisted module} for an automorphism $g$ of order $N$ is given by a vector space $M$ equipped with a linear map

$$Y_M:V\rightarrow End(M)[[z^{\pm\frac{1}{N}}]]$$
$$v\mapsto Y_M(v,z)=\sum_{n\in\frac{\mathbb{Z}}{N}} v_nz^{-n-1}, v_n\in End(M)$$

such that 1., 2. and the \emph{twisted Jacobi identity} hold, for $u\in V^j$ (as in \ref{4.2}): 
\begin{align*}
    &z^{-1}\delta\left(\frac{z_1-z_2}{z_0}\right)Y_M(u,z_1)Y_M(v,z_2)-z_0^{-1}\delta\left(\frac{z_2-z_1}{-z_0}\right)Y_M(v,z_2)Y(u,z_1)= \\ &z_2^{-1}\left(\frac{z_1-z_0}{z_2}\right)^{-j/N}\delta\left(\frac{z_1-z_0}{z_2}\right) Y_M(Y(u,z_2)v,z_2)
\end{align*}

A weak $V$-module is \emph{admissible} if it carries a $(\mathbb{Z},+)$-grading $M=\bigoplus_{n\in \mathbb{Z}} M_n$ such that, if $v\in V_r$, then $v_mM_n\subseteq M_{n+r-m-1}$. \\

A weak twisted $V$-module is \emph{admissible} if it carries a $\frac{\mathbb{Z}}{N}$-grading $M=\bigoplus_{n\in \frac{\mathbb{Z}}{N}} M_n$ such that, if $v\in V_r$, then $v_mM_n\subseteq M_{n+r-m-1}$. \\

A weak (twisted) $V$-module is \emph{ordinary} if it carries a $\mathbb{C}$-grading $M=\bigoplus_{\lambda\in\mathbb{C}} M_\lambda$ such that
\begin{itemize}
    \item $dim(M_\lambda)<\infty$,
    \item $M_{\lambda+n}=0$ for fixed $\lambda$ and $n<<0$,
    \item $L_0 m = \omega_1(m) = \lambda m\ \forall m$.
\end{itemize}

A weak (twisted) $V$-module is \emph{irreducible} if it is not a direct sum of other weak (twisted) modules.
\end{dfn} 

It might not be obvious that an admissible module is ordinary, but it follows from the following proposition:

\begin{prp}
For an irreducible admissible module $M$ of a VOA $V$ there exists an $h\in\mathbb{Q}$ such that $M_h$ is nonzero and if $M_\alpha\neq 0$ for some $\alpha\in\mathbb{C}$, then $\alpha-h\in\mathbb{N}$.
\end{prp}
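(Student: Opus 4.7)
The plan is to study the action of $L_0=\omega_{(1)}$ on $M$, using admissibility to control the spectrum and using irreducibility to pin down the distinguished weight $h$.

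First, I observe that $L_0$ preserves the admissible grading: applying the admissibility condition with $v=\omega\in V_2$ and $m=1$ gives $L_0 M_n\subseteq M_{n+2-1-1}=M_n$. From the Jacobi identity one extracts the standard commutator $[L_0,v_k]=(\mathrm{wt}(v)-k-1)v_k$ for homogeneous $v\in V_r$, so every mode $v_k$ shifts $L_0$-eigenvalues by the integer $r-k-1$. Consequently, if $M_\alpha\neq 0$ then the $V$-submodule generated by $M_\alpha$ has its full $L_0$-spectrum in $\alpha+\mathbb{Z}$, and irreducibility forces the whole spectrum of $L_0$ on $M$ into a single coset $\alpha+\mathbb{Z}$.

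Second, I isolate $h$. After shifting the $\mathbb{Z}$-grading I may assume $M_0\neq 0$ and $M_n=0$ for $n<0$; this boundedness-below step is delicate, and the key input is the truncation axiom $v_k m=0$ for $k\gg 0$ together with the impossibility of stripping off a lower nontrivial submodule in an irreducible admissible module. Only modes $v_k$ with $\mathrm{wt}(v)-k-1\le 0$ preserve the lowest grade, so $M_0$ becomes a module over the degree-zero algebra, which in the untwisted case is Zhu's algebra $A(V)$ and in the twisted setting its analogue $A_g(V)$. By Zhu's correspondence this module is irreducible, and $L_0|_{M_0}$, being the image of $\omega$ in $A(V)$, acts as a single scalar $h$. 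Any $L_0$-eigenvector in $M_n$ with $n>0$ then has eigenvalue $h+n\in h+\mathbb{N}$ by the shift rule, so $\alpha-h\in\mathbb{N}$ whenever $M_\alpha\neq 0$.

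The main obstacle is the rationality $h\in\mathbb{Q}$, which is invisible from the axioms of $V$ and $M$ alone. I would derive it from the rational structure of the Zhu algebra: under the standard hypotheses governing Moonshine-type VOAs, $A_g(V)$ is a finite-dimensional algebra admitting a $\mathbb{Q}$-form (or at worst a form over $\mathbb{Q}(\zeta_N)$), so the eigenvalue of the image of $\omega$ in any simple module is forced to be rational. Carefully justifying the boundedness-below of the grading and the rational structure of $A_g(V)$ are, in my view, the two points where a complete proof must appeal to results beyond the axioms stated in the excerpt; the integrality $\alpha-h\in\mathbb{Z}$ is by contrast a direct consequence of the commutator $[L_0,v_k]$ and irreducibility.
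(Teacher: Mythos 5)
The paper offers no argument here: its ``proof'' is a bare citation to page 244 of Zhu, so your attempt is the only actual argument on the table. The first two thirds of it are sound and standard. The commutator $[L_0,v_k]=(\mathrm{wt}(v)-k-1)v_k$ does confine the $L_0$-spectrum of the submodule generated by any nonzero $M_\alpha$ to the coset $\alpha+\mathbb{Z}$, the passage to a lowest graded piece $M_0$ is indeed the Dong--Li--Mason result that a simple $\mathbb{Z}$-gradable weak module can be regraded over $\mathbb{N}$, and $L_0$ acts on the simple $A(V)$-module (or $A_g(V)$-module) $M_0$ by a single scalar $h$ because the image of $\omega$ is central there. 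You are right to flag the boundedness-below step as an input from outside the stated axioms.

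The genuine gap is the rationality of $h$, and your proposed repair does not work. A finite-dimensional $\mathbb{C}$-algebra need not admit a $\mathbb{Q}$-form, and even when $A_g(V)$ does, the eigenvalue of the central element $[\omega]$ on a simple module is rational only if $[\omega]$ itself lies in that rational form, which nothing forces. In fact the statement as written is false for general VOAs: the rank-one Heisenberg VOA has irreducible admissible modules $M(1,\lambda)$ of conformal weight $\lambda^2/2$, an arbitrary complex number. Rationality of conformal weights is a theorem about rational, $C_2$-cofinite VOAs (Zhu; Dong--Li--Mason, following Anderson--Moore): one shows that the span of the characters is finite-dimensional and $SL_2(\mathbb{Z})$-invariant, hence satisfies a modular differential equation with regular singular points and rational exponents, from which $h-c/24$ and then $h$ are rational. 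A complete proof must therefore both import these hypotheses (absent from the proposition as stated, though present elsewhere in the paper's standing assumptions) and route the rationality through modular invariance of characters rather than through the internal structure of the Zhu algebra.
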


\begin{proof}
See page 244 of \cite{Zhu}.
\end{proof}

\begin{dfn}
The \emph{conformal weight} $h(M)$ of an admissible module $M$ is defined as the smallest $L_0$-eigenvalue on $M$.
\end{dfn}

We require some regularity assumptions on our VOAs:

\begin{dfn}\label{4.6}
A VOA $V$ is \emph{of CFT type} if $V_{k}$ is trivial for negative $k$ and $V_0$ is spanned by $1$. \\

A VOA $V$ is \emph{$C_2$-cofinite} if $V/C_2(V)$ is finite-dimensional, where $C_2(V)=\{u_{(-2)}v\mid u,v\in V\}$. \\

A VOA $V$ is \emph{regular} if every weak $V$-module is a direct sum of simple weak $V$-modules. \\

A VOA $V$ is \emph{weakly rational} if it is regular, has only a finite number of irreducible weak $V$-modules and every irreducible weak $V$-module is an ordinary $V$-module. \\

A VOA $V$ is \emph{holomorphic} if it is regular and has a unique simple module.
\end{dfn}

The connection of VOAs to Moonshine  comes about through the graded dimensions of their characters:

\begin{dfn}
Given an element $v\in V_n$, the \emph{zero-mode} $o(v)$ of $v$ is given by $v_{(n-1)}$.
\end{dfn}

\begin{dfn}
Given a weak module $M$ of a VOA $V$, its \emph{character} is defined as

$$\chi_M(\tau,v):=q^{-\frac{c}{24}}\sum_{n=0}^{\infty} tr_{M_{h(M)+n}} o(v) q^{h+n}$$

where $q=e^{2\pi i \tau}$, $h$ is the height of $M$, $\tau\in \mathbb{C}$ and $v\in V$. 
\end{dfn}

\begin{thm}
If $V$ is a $C_2$-cofinite weakly rational VOA with $\Phi(V)$ its set of irreducible modules, then there is a representation $\rho$ of $SL_2(\mathbb{Z})$ by complex matrices $\rho(A)$ indexed by $V$-modules $M,N\in\Phi(V)$, such that the characters of the modules of $V$ obey the relation

$$\chi_M\left(\frac{a\tau+b}{c\tau+d},v\right)=(c\tau+d)^n\sum_{N\in\Phi(V)}\rho\begin{pmatrix}
a & b \\
c & d
\end{pmatrix}_{M, N}
\chi_N(\tau,v)
$$
\end{thm}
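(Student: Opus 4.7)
The plan is to follow Zhu's original modularity argument, which rests on a careful combination of representation theory of \emph{Zhu's associative algebra} and an analytic analysis of trace functions on the torus. I would first reduce the geometric statement to a dimension count, then establish invariance under the generators $S=\bigl(\begin{smallmatrix}0&-1\\1&0\end{smallmatrix}\bigr)$ and $T=\bigl(\begin{smallmatrix}1&1\\0&1\end{smallmatrix}\bigr)$ of $SL_2(\mathbb{Z})$.

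First I would introduce Zhu's algebra $A(V)=V/O(V)$, where $O(V)$ is spanned by the elements $\mathrm{Res}_{z}\,Y(u,z)(1+z)^{\deg u}z^{-2}v$ for homogeneous $u$. The two facts I would quote are that $C_2$-cofiniteness forces $\dim A(V)<\infty$, and that $M\mapsto M(h(M))$ induces a bijection between isomorphism classes of irreducible admissible $V$-modules and irreducible $A(V)$-modules. This guarantees in particular that $\Phi(V)$ is a finite set, so the formula makes sense with finite sums and matrices of finite rank.

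Next I would consider the space $\mathcal{C}(V)$ of one-point functions on the torus: those holomorphic maps $S\colon V\times\mathbb{H}\to\mathbb{C}$ which are linear in $V$, vanish on a subspace $O_\tau(V)$ built from $O(V)$ and the Weierstrass series $P_k(\tau)$, and satisfy a $q$-expansion recursion expressing $S(L_{-2}v,\tau)$ in terms of derivatives of $S(v,\tau)$ and lower insertions. The crucial estimate, which uses $C_2$-cofiniteness in an essential way, is that $\dim\mathcal{C}(V)=|\Phi(V)|$: upper bound via the vanishing conditions, lower bound by exhibiting linearly independent elements. I would then verify that for each $M\in\Phi(V)$ the character $\chi_M(\tau,v)$ really is an element of $\mathcal{C}(V)$, so the characters form a basis.

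Finally I would establish the $SL_2(\mathbb{Z})$-action. The $T$-transformation is transparent: each character picks up the scalar $e^{2\pi i(h(M)-c/24)}$, giving a diagonal $\rho(T)$. The $S$-transformation is the real content: one shows that if $S(v,\tau)\in\mathcal{C}(V)$ and $v\in V_n$, then $(c\tau+d)^{-n}S(v,(a\tau+b)/(c\tau+d))\in\mathcal{C}(V)$ as well. This comes down to the classical transformation law of the Eisenstein/Weierstrass functions $P_k$ appearing in the recursion, combined with the $\mathrm{SL}_2(\mathbb{Z})$-covariance of the defining ideal $O_\tau(V)$. The matrix $\rho(A)_{M,N}$ is then read off by expanding the transformed character in the character basis, and the cocycle relation $\rho(AB)=\rho(A)\rho(B)$ follows from the uniqueness of such an expansion. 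The main obstacle is the dimension bound $\dim\mathcal{C}(V)\le|\Phi(V)|$; without $C_2$-cofiniteness one cannot control the $q$-expansion recursion, and the whole argument collapses. Modular covariance of the Weierstrass recursion is a delicate but ultimately classical computation once the algebraic setup is in place.
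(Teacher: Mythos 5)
The paper does not prove this theorem at all --- it states it and defers entirely to the citation of Zhu, and your outline is a faithful sketch of exactly that cited argument: Zhu's algebra $A(V)=V/O(V)$ and the top-level bijection giving finiteness of $\Phi(V)$, the space of torus one-point functions with its Weierstrass recursion, the dimension count $\dim\mathcal{C}(V)=|\Phi(V)|$ resting on $C_2$-cofiniteness, and the $S$- and $T$-covariance yielding the matrix representation $\rho$. The only detail worth flagging is that the weight $n$ in the transformation law refers to the degree of $v$ in Zhu's second, ``square-bracket'' VOA structure $V_{[n]}$ rather than the original grading $V_n$ --- a distinction that both your sketch and the paper's statement of the theorem elide.
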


This is \emph{Zhu's Theorem} \cite{Zhu}, the fundamental result in the formalization of Moonshine. Given the necessary vocabulary, this theorem can be summarized by saying that $\chi_M$ is of weight $n$ and multiplier $\rho$ (see \cite{MBM}). It is hoped that VOAs can be used for the classification of finite simple groups in a similar way to how Lie algebras can be used for the classification of simple Lie groups \cite{F}.

\begin{xmp}
(\cite{MBM}, introduction) The graded dimension $\chi_M(\tau,1)$ of the Monster VOA is the $SL_2(\mathbb{C})$-hauptmodul $J$.
\end{xmp}

Since the category of modules of a holomorphic VOA is semisimple linear with one simple object, it is equivalent to $Vect_\mathbb{C}$. We are more interested in the category of twisted modules of $V$. For a particular $g\in Aut(V)$, the category $C_g$ of $g$-twisted modules is again equivalent to $Vect_\mathbb{C}$ \cite{DLM}. However, the category of twisted modules $C=\bigoplus_{g\in G} C_g$ exhibits a nontrivial monoidal product. For a description of this product, we assume the regularity conjecture holds:

\begin{conj}
Let $V$ be a holomorphic VOA and and $G\subseteq Aut(V)$ be a finite group. Then the sub-algebra of fixed points of $V$ under $G$ is regular as in Definition \ref{4.6}.
\end{conj}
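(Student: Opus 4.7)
The plan is to proceed by induction on $|G|$, reducing to the case where $G$ is cyclic of prime order, and then building the regularity of $V^G$ out of the twisted module theory that $V$, as a holomorphic VOA, already enjoys. The overall scheme echoes the classical Galois-theoretic philosophy: cut $G$ into prime layers along a normal series, resolve each layer using twisted-sector information, and reassemble.

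For the inductive step, suppose $G$ admits a proper normal subgroup $H$ with $G/H$ cyclic of prime order (which exists for solvable $G$). The induction hypothesis gives that $V^H$ is regular. The quotient $G/H$ acts on $V^H$, and $V^G = (V^H)^{G/H}$; if one has propagated enough structure (regularity together with a suitable twisted-module package) down to $V^H$, the problem reduces to a single prime-order cyclic orbifold, i.e.\ the base case.

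For the base case, take $G=\langle g\rangle$ of prime order $p$. I would first establish $C_2$-cofiniteness of $V^G$: by a theorem of Dong--Mason $V$ is a finitely generated $V^G$-module, and combining this with the eigenspace decomposition $V=\bigoplus_{j=0}^{p-1}V^j$ of Proposition \ref{4.2} transfers $C_2$-cofiniteness from $V$ to $V^G$. For rationality, holomorphicity of $V$ together with the Dong--Li--Mason existence/uniqueness theorem supplies, for each $i\in\mathbb{Z}/p$, a unique irreducible $g^i$-twisted $V$-module $M^i$ (with $M^0=V$). The automorphism $g$ lifts to each $M^i$ up to a scalar; after normalising the scalars one obtains a genuine $G$-action, and the eigenspace decomposition of each $M^i$ under this action produces $p^2$ candidate irreducible $V^G$-modules. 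It then remains to show that every simple weak $V^G$-module $W$ occurs in this list; the key input is that any such $W$ embeds into some $g^i$-twisted $V$-module, hence into the unique $M^i$, pinning it down as an eigen-component.

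The main obstacle will be precisely this last step: the completeness of the list, which requires a careful analysis of the twisted Zhu algebras $A_{g^i}(V)$ and a Schur--Weyl-type comparison between $V^G$-structure on $M^i$ and $G$-equivariance. A secondary but more serious obstacle is the non-solvable case: the normal-series reduction fails when $G$ admits no chain with prime cyclic quotients, so a direct orbifold argument, or an appeal to the modular tensor category structure of the full twisted module category $\bigoplus_{g\in G}\mathcal{C}_g$, is needed to handle arbitrary finite $G$ uniformly.
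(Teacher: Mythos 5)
There is a fundamental mismatch here: the statement you are proving is stated in the paper as a \emph{conjecture}, not a theorem, and the paper offers no proof of it. It only remarks that the regularity conjecture ``was proven for solvable groups'' and is otherwise open. Your proposal, honestly, reproduces the known solvable-case strategy (essentially Miyamoto's cyclic orbifold theory as extended by Carnahan--Miyamoto): refine $G$ by a normal series with prime cyclic quotients, handle a single prime-order layer via the Dong--Li--Mason twisted-module existence/uniqueness theorem and the eigenspace decomposition, and induct. That is a reasonable sketch of the published result, but it does not prove the statement as written, which asserts regularity of $V^G$ for \emph{arbitrary} finite $G$.

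You identify the obstruction yourself in your last paragraph: when $G$ is not solvable there is no chain of normal subgroups with prime cyclic quotients, so the inductive scheme never gets off the ground, and no uniform replacement argument is supplied. Appealing to ``the modular tensor category structure of $\bigoplus_g \mathcal{C}_g$'' is circular in this context, since establishing that the twisted module category is a (modular) fusion category is precisely what the regularity conjecture is needed for in the paper. Additionally, even in the prime-order base case the completeness step --- showing every simple weak $V^G$-module arises as an eigen-component of some $M^i$ --- is the technical heart of the cyclic orbifold theorem and is only gestured at, not carried out. The correct resolution is to present this material as what it is: a statement of the conjecture together with a citation of the solvable case, exactly as the paper does, rather than as a proof.
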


The regularity conjecture is widely assumed to hold and was proven for solvable groups \cite{GOG, CM}. With it in place, we can describe the monoidal product of twisted modules of a VOA. We do this in the next section.

\section{Monoidal Categories}\label{Mono}
Monoidal categories are a categorification of monoids. While examples of their use abound, we will have to go fairly deeply into their particularities, which justifies repeating their definition:

\begin{dfn}
A monoidal category is a tuple $(M,\otimes, \alpha, \lambda, \rho)$ consisting of 
\begin{itemize}

\item a category $M$

\item a functor

   $
      \otimes 
        \;\colon\; 
      M \times M  
       \longrightarrow
      M
   $

\item an object

   $
     1 \in M 
   $

   called the unit object, 

\item a natural isomorphism 

   $
     \alpha
       \;\colon\; 
     (-)\otimes ((-)\otimes (-))
       \overset{\simeq}{\longrightarrow}
     ((-)\otimes (-))\otimes (-)
   $

   with components of the form

   $
     \alpha_{x,y,z} : (x \otimes y)\otimes z \to x\otimes (y\otimes z) 
   $

   called the associator, 

\item a natural isomorphism 

   $
     \lambda 
       \;\colon\; 
     (1 \otimes (-)) 
       \overset{\simeq}{\longrightarrow}
     (-)
   $

   called the \emph{left unitor}, and

\item a natural isomorphism

   $
     \rho \;\colon\; (-) \otimes 1 \overset{\simeq}{\longrightarrow} (-)
   $

    called the \emph{right unitor}, 

\end{itemize}
such that the following two kinds of diagrams commute, for all objects involved:
\begin{itemize}
\item the \emph{pentagon identity}
$$
\begin{tikzcd}[column sep=small]
& ((X\otimes Y)\otimes Z) \otimes W \arrow[r, "\alpha_{X\otimes Y, Z, W}"] \arrow[d, "\alpha_{X Y, Z \otimes id_W}"] & (X\otimes Y)\otimes (Z \otimes W) \arrow[r, "\alpha_{X Y, Z\otimes W}"] & (X\otimes Y(\otimes (Z \otimes W)) \\
& (X\otimes (Y\otimes Z)) \otimes W \arrow[rr, "\alpha_{X,Y\otimes Z, W}"] & & X\otimes ((Y\otimes Z) \otimes W) \arrow[u, "id_X \otimes \alpha_{Y, Z, W}"]
\end{tikzcd}
$$

\item the triangle identity
$$
\begin{tikzcd}[column sep=small]
& (X\otimes 1) \otimes Y  \arrow[rr, "\alpha_{X,1,Y}"] \arrow[dr, "\rho_X\otimes id_Y"] & & X\otimes(1\otimes Y) \arrow[dl,"id_X\lambda_Y"]\\ 
& & X\otimes Y
\end{tikzcd}
$$
\end{itemize}
\end{dfn}

Monoidal categories are a categorification of monoids, so that a monoidal category with trivial 1-structure is just a monoid. 

\begin{xmp}\label{Galpha}
\cite{EGNO} Given any group $G$, field $\mathbb{K}$ and $\mathbb{K}^\times$ valued 3-cocycle $\alpha\in C^3(G, \mathbb{K}^\times)=Grp_\infty(BG, B^3 A)$, we can define a category $G_\alpha$. Its set of objects $(G_\alpha)_0$ is isomorphic to the set $G$, and for objects $g_\alpha\,h_\alpha \in (G_\alpha)_0$, its morphisms are given by $Hom(g_\alpha, h_\alpha) = \delta(g, h) End(\mathbb{K})$, where $\delta$ is the Kronecker delta and $End$ denotes the linear endomorphisms. In particular, the automorphism group of any object corresponds to $\mathbb{K}^\times$. Its monoidal product is given by $g_\alpha\otimes h_\alpha=(gh)_\alpha$, its unit isomorphism is the identity and its associator $\alpha_{-,-,-}$ is given by a 3-cocycle $\alpha(-,-,-)$.
\end{xmp}

$G_\alpha$ is an example of a skeletal monoidal category:

\begin{dfn}
A category is \emph{skeletal} if every isomorphism class contains only one object. A monoidal category is \emph{strict} if the associator and unitor isomorphisms are the identity.
\end{dfn}

\begin{thm}
Every monoidal category is equivalent to a skeletal monoidal category with trivial unitors.
\end{thm}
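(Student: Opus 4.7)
The plan is to proceed in two steps: first replace $M$ by an equivalent skeleton, then modify the resulting monoidal structure to make the unitors strict.

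For the first step, I would invoke the axiom of choice to select one representative from each isomorphism class of $M$, yielding a full subcategory $\iota\colon M^{sk}\hookrightarrow M$ that is skeletal and equivalent to $M$, together with a quasi-inverse $r\colon M\to M^{sk}$ and natural isomorphism $\eta_X\colon X\xrightarrow{\sim}\iota r(X)$. Then I transport the monoidal structure along $\iota,r,\eta$: define
\begin{equation*}
X\otimes^{sk} Y := r(\iota X\otimes \iota Y), \qquad 1_{sk} := r(1),
\end{equation*}
with associator and unitors obtained by conjugating $\alpha,\lambda,\rho$ with the appropriate components of $\eta$. A routine diagram chase shows that $(M^{sk},\otimes^{sk})$ is monoidal and that $\iota$ refines to a monoidal equivalence. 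Note that because $M^{sk}$ is skeletal we automatically have $1_{sk}\otimes^{sk} X = X = X\otimes^{sk} 1_{sk}$ as objects, yet the transported unitors $\lambda^{sk}_X,\rho^{sk}_X$ are in general nontrivial automorphisms of $X$.

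For the second step, I would replace $\otimes^{sk}$ on $M^{sk}$ by a new product $\boxtimes$ defined by forcibly identifying the unit action:
\begin{equation*}
X\boxtimes Y := \begin{cases} Y & \text{if } X = 1_{sk}, \\ X & \text{if } Y = 1_{sk} \text{ and } X\neq 1_{sk}, \\ X\otimes^{sk} Y & \text{otherwise}, \end{cases}
\end{equation*}
with the action on morphisms transported from $\otimes^{sk}$ by conjugation with $\lambda^{sk}$ and $\rho^{sk}$ in the first two cases. By construction the new unitors are identities, and I would define a new associator $\widetilde\alpha$ by case analysis on which of the three arguments equals $1_{sk}$, conjugating $\alpha^{sk}$ by the appropriate unitors.

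The main obstacle is verifying that $\widetilde\alpha$ still satisfies the pentagon identity: there are $2^4=16$ cases according to which of the four arguments equal $1_{sk}$. The case in which none do is the original pentagon for $\alpha^{sk}$; the case in which all do is trivial; each mixed case reduces to an instance of the pentagon after cancelling factors of $\lambda^{sk}$ and $\rho^{sk}$ that arise symmetrically on both sides. The classical coherence consequences of the triangle identity, namely $\lambda^{sk}_{1_{sk}}=\rho^{sk}_{1_{sk}}$ and the compatibility of unit insertion with $\alpha^{sk}$ on its outer arguments, are exactly what is needed to see the cancellations. Finally, the identity-on-objects functor $(M^{sk},\otimes^{sk})\to(M^{sk},\boxtimes)$ equipped with coherence isomorphisms given by $\lambda^{sk},\rho^{sk}$ on the unit-involving components is a monoidal equivalence, so composing gives $M\simeq(M^{sk},\boxtimes)$ with the latter skeletal and with strict unitors, as required.
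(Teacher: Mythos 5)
Your proposal is correct and follows the same two-step decomposition as the paper: first pass to a skeleton by transporting the monoidal structure along an adjoint equivalence, then replace the transported product by an equivalent one whose unitors are identities. The only real difference is that the paper outsources the second step to a citation (Theorem 3.2 of \cite{Mst}), whereas you carry out the construction explicitly; what you write is in effect a proof of that cited theorem, which is a worthwhile addition. One streamlining remark: the $2^4$-case pentagon check is avoidable. Package your modification as a natural isomorphism $\phi_{X,Y}\colon X\boxtimes Y\to X\otimes^{sk}Y$ equal to $(\lambda^{sk}_Y)^{-1}$ when $X=1_{sk}$, to $(\rho^{sk}_X)^{-1}$ when $Y=1_{sk}$ and $X\neq 1_{sk}$, and to the identity otherwise; then define $\boxtimes$ on morphisms and $\widetilde\alpha$ by conjugation with $\phi$, i.e.\ so that the identity-on-objects functor equipped with $\phi$ becomes a strong monoidal functor $(M^{sk},\boxtimes)\to(M^{sk},\otimes^{sk})$. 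With that packaging, functoriality of $\boxtimes$ and the pentagon and triangle axioms for $\widetilde\alpha$ all follow formally from the corresponding facts for $\otimes^{sk}$ together with naturality of $\phi$, and the verification that the new unitors are trivial reduces to $\phi_{1,Y};\lambda^{sk}_Y=\mathrm{id}$ and $\phi_{X,1};\rho^{sk}_X=\mathrm{id}$, the only nontrivial case being $X=Y=1_{sk}$, where Kelly's lemma $\lambda^{sk}_{1}=\rho^{sk}_{1}$ is exactly what is needed --- an ingredient you correctly identified. Your observation that skeletality forces $1_{sk}\otimes^{sk}X=X$ on objects, so that the unitors are automorphisms and can meaningfully be cancelled, is the key point and is handled correctly.
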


\begin{proof}
Every monoidal category is equivalent to a skeletal monoidal category where the unitors and associators are turned into automorphisms. By Theorem 3.2 in \cite{Mst}, an equivalent monoidal product can be defined on the same category such that its unitors are the identity.
\end{proof}

Moreover, the examples we were looking at have both the property that every object $g$ has a weak inverse $g^{-1}$ under the monoidal product $\otimes$ in the sense that there is an isomorphism $g\otimes g^{-1}\overset{\simeq}{\longrightarrow} 1$, and that every morphism is invertible. Thus they are not only monoidal categories, they are \emph{2-groups}. So we can see that the difference in specificity between a monoidal category and a group can be understood in terms of the various intermediary stages that are each more special than the notion of a monoidal category and less special than that of a group, but in different ways: a monoidal category $M$ can be reduced to a monoid $M_0$ by \emph{0-truncation}, the trivializing of all categorical structure, and if all objects of the monoidal category are invertible under the monoidal product, that 0-truncation is a group\footnote{More precisely 0-truncation is usually defined in geometrical terms as the filling of all $>0$ cells of the simplicial nerve of the monoidal category (see chapter 7).} \footnote{On the other hand, each monoid can be understood as a monoidal category with trivial morphisms.}. Another intermediary step is the reduction of the underlying category to its \emph{core groupoid}, discarding all noninvertible morphisms. Checking the definition, it can readily be seen that the monoidal product preserves invertible structure, and can thus be restricted to the core groupoid. If we then also restrict our class of objects to those that are invertible under the monoidal product, we obtain the \emph{Picard 2-group} $Pic(M)$ of our monoidal category $M$. This mapping is functorial. Applying then 0-truncation, or alternatively, extracting the maximal subgroup of the 0-truncation of our monoidal category, we arrive at a group as the last step in this reduction. \\

One advantage of restricting to 2-groups is that for those, through cohomology, the procedure of 0-truncation is invertible:

\begin{thm}\label{Ass}
Given a skeletal 2-group $G$, the group of automorphisms $A$ of each object is the same and abelian. Furthermore, if its truncation is the group $G_0$, then the two have the same set of objects and the associator $\alpha_{-,-,-}$ of $G$ is a 3-cocycle $\alpha(-,-,-)$ on $G_0$ with values in the abelian group $A$. Furthermore, 2-groups with truncation $G_0$ are in correspondence with cohomology classes on $G$ modulo the action of $Out(G_0)$ on the sets of cohomology classes on $G_0$. In particular, the data of a 2-group $G_\alpha$ is equivalent to that of a 3-cocycle $\alpha$ on its 0-truncation. 
\end{thm}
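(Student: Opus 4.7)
The plan is to build up the statement in five steps corresponding to its five clauses. First I would establish that $A := Aut(1)$ is abelian via the Eckmann-Hilton argument: on $Aut(1)$ there are two unital binary operations, composition and the monoidal product $\otimes$ (transported along the unitors, which we may assume trivial by the previous theorem), and the functoriality of $\otimes$ supplies the interchange law that forces them to agree and to be commutative. Next, for an arbitrary object $x$ with chosen weak inverse $x^{-1}$, I would show that the homomorphism $A \to Aut(x)$ sending $\phi \mapsto \phi \otimes id_x$ is an isomorphism; its inverse sends $\psi \in Aut(x)$ to the automorphism of $1$ obtained by tensoring $\psi \otimes id_{x^{-1}}$ and using the chosen isomorphism $x \otimes x^{-1} \simeq 1$. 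This identifies $Aut(x)$ canonically with $A$ for every object $x$.

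The second clause is immediate since in a skeletal category each isomorphism class contains a single object, so the underlying set of the 0-truncation $G_0$ coincides with the object set of $G$ and the monoidal product descends to the group law on $G_0$. For the third clause, skeletality forces $(gh)k = g(hk) = ghk$ as objects, so the associator $\alpha_{g,h,k}$ is an element of $Aut(ghk) = A$, yielding a function $\alpha \colon G_0^3 \to A$. Transcribing the pentagon identity for $((gh)k)l$ into an equation in $A$ gives exactly
\[
\alpha(h,k,l)\,\alpha(g,hk,l)^{-1}\,\alpha(g,h,kl)\,\alpha(gh,k,l)^{-1}\,\alpha(g,h,k) = 1,
\]
the standard 3-cocycle identity, while the triangle identity (with trivial unitors) makes $\alpha$ normalized.

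For the fourth clause I would consider a monoidal equivalence $F \colon G_\alpha \to G_\beta$ between two skeletal 2-groups with the same 0-truncation $G_0$. Such an $F$ amounts to a bijection $\sigma$ on objects (necessarily a group automorphism of $G_0$) together with a 2-cochain $c \colon G_0^2 \to A$ encoding the coherence isomorphisms $F(g)\otimes F(h) \simeq F(gh)$. The compatibility of $F$ with the two associators translates into the relation $\sigma^*\alpha \cdot \beta^{-1} = \delta c$, so $[\sigma^*\alpha] = [\beta]$ in $H^3(G_0,A)$. Inner automorphisms of $G_0$ extend to monoidal natural isomorphisms of the identity functor, hence act trivially on cohomology, and the quotient is precisely by $Out(G_0)$. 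The fifth clause is then the cocycle-level summary of everything above.

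The main obstacle I anticipate is the fourth clause: unpacking the data of a monoidal equivalence of skeletal 2-groups and verifying that two such 2-groups correspond to cohomologous associators exactly up to the $Out(G_0)$-action. Tracking the interaction between the object bijection $\sigma$ and the coherence 2-cochain $c$ requires careful diagram chasing, and confirming that inner automorphisms really are invisible at the level of $H^3$ (rather than contributing additional relations) is the most technical point; the other clauses are essentially direct consequences of skeletality plus the Eckmann-Hilton trick.
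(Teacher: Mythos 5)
Your proposal is essentially correct, but note that the paper does not prove this theorem at all: its ``proof'' is a one-line citation to Theorem 2.11.5 of \cite{EGNO}. What you have written is a sketch of the standard classification argument (the one going back to Sinh and presented in EGNO and in Baez--Lauda's work on 2-groups), so in effect you are supplying the content that the paper outsources. Your five-step decomposition is the right one: Eckmann--Hilton for $Aut(1)$, transport of $Aut(1)$ to $Aut(x)$ along the equivalence $-\otimes x$, skeletality identifying objects with $G_0$ and the associator with a function $G_0^3\to A$, the pentagon giving the cocycle condition, and monoidal equivalences giving coboundaries twisted by an automorphism of $G_0$, with inner automorphisms acting trivially on $H^3$. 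Two small cautions. First, your displayed cocycle identity has the inverses on the wrong terms: transcribing the pentagon $\alpha(gh,k,l)\,\alpha(g,h,kl)=\alpha(g,h,k)\,\alpha(g,hk,l)\,\alpha(h,k,l)$ gives
\[
\alpha(h,k,l)\,\alpha(gh,k,l)^{-1}\,\alpha(g,hk,l)\,\alpha(g,h,kl)^{-1}\,\alpha(g,h,k)=1,
\]
i.e.\ the signs alternate with the index of the omitted/merged entry; as written your equation is not the standard condition (you have swapped the roles of $\alpha(g,hk,l)$ and $\alpha(g,h,kl)$). Second, in clause three you implicitly need that $id_g\otimes a$ and $a\otimes id_g$ both correspond to $a$ under the identification $Aut(gh)\cong A$; this is exactly where the naturality of the associator and functoriality of $\otimes$ enter, and it deserves a sentence. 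Neither point is a conceptual gap, and your anticipated difficulty (clause four) is indeed where the real diagram chasing lives.
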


\begin{proof}
This is Theorem 2.11.5 in \cite{EGNO}.
\end{proof}

Monoidal categories are the canonical categorification of monoids. The archetypical example of a monoid is the monoid of a ring. The categorification of a ring, or, more generally, an $R$-algebra, is slightly less canonical. One possible approach is using fusion categories.

\begin{dfn}
An object $X$ in a monoidal category is \emph{right dualizable} if there exists an object $X^\ast$ and morphisms $ev_X:X^\ast\otimes X\rightarrow 1$ and $coev_X:1\rightarrow X\otimes X^{\ast}$ such that 

$$
\begin{tikzcd}[column sep=huge]
X^\ast \otimes (X\otimes X^\ast) \arrow[d, "\alpha_{X^\ast,X,X^\ast}^{-1}"] & X^\ast\otimes 1 \arrow[l, "id_{X^\ast}\otimes coev_X"] \arrow[d, "\rho_{X^{\ast}}\lambda_{X^\ast}^{-1}"] \\
(X^\ast\otimes X)\otimes X^\ast \arrow[r, "ev_X\otimes id_{X^\ast}"] & 1\otimes X^\ast \\
\end{tikzcd}
$$

and

$$
\begin{tikzcd}[column sep=huge]
(X\otimes X^\ast)\otimes X \arrow[d, "\alpha_{X,X^\ast,X}"] & 1\otimes X \arrow[l, "coev_X \otimes id_{X^\ast}"] \arrow[d, "\lambda_{X};\rho_X^{-1}"] \\
X\otimes (X^\ast\otimes X) \arrow[r, "id_{X^\ast}\otimes ev_X "] & X\otimes 1 \\
\end{tikzcd}
$$

An object $X$ is \emph{left dualizable} if there exists an object $X_\ast$ with the same morphisms and conditions as above with the positions of $X$ and $X_\ast$ reversed. \\

A monoidal category $C$ is \emph{rigid} if every object is left and right dualizable. \\

A monoidal category $C$ is \emph{semisimple} if every object in it is a direct sum of finite simple objects. \\

A monoidal category $C$ is \emph{linear} over a ground ring $R$ if each $hom$-set in $C$ has naturally the structure of an $R$-module and the composition and identity morphisms are bilinear\footnote{In other words, it is \emph{enriched} over $R-mod$. We will not go into the theory of enriched categories, but see, for instance, the nlab page \url{http://nlab-pages.s3.us-east-2.amazonaws.com/nlab/show/enriched+category}.}. \\

A \emph{fusion category} $A$ is a rigid semisimple linear monoidal category with only finitely many isomorphism classes of simple objects, such that the endomorphism monoid of the unit object is isomorphic to the multiplicative monoid of the ground ring $R$\footnote{Normally the ring is assumed to be a field, however, this is not necessary.}. \\

A fusion category is \emph{pointed} if all of its simple objects are invertible under the monoidal product. It is called \emph{G-pointed} if additionally no non-trivial morphisms exist between non-isomorphic simple objects and $G$ is the 0-truncation of its subcategory of invertible objects.\footnote{This terminology is unrelated to \ref{pointed}.}
\end{dfn}

The theory of pointed fusion categories is very similar to that of $R$-algebras with addition corresponding to the direct sum, multiplication to the tensor product, simple objects to irreducibles and units to invertible objects\footnote{With one important difference being that pointed fusion categories do not have additive inverses.}. 

\begin{xmp}
$\cite{EGNO}$ If $A$ is the group of units of a field $\mathbb{K}$, a 3-cocycle $G^3\rightarrow A$ generates a $G$-pointed fusion category $Vect_\alpha$. The objects of this category are $G$-graded vector spaces $\sum_{g\in G} \mathbb{K}_g^{n_g}$, the morphisms are given by componentwise vector space morphisms, the multiplication is given by $\mathbb{K}_g\otimes \mathbb{K}_h = \mathbb{K}_{gh}$ and linearly extended, and the associator is given by $\alpha$ on the primitive objects $\mathbb{K}_g$ and linearly extended. The Picard 2-group of this category is $G_\alpha$ from example \ref{Galpha}.
\end{xmp}

$G_\alpha$ is a good reduction of $Vect_\alpha$ in the sense that the restriction of the 2-Picard functor from the 2-category of monoidal categories and functors to the category of 2-groups is fully faithful when restricted to the non-full subcategory of pointed semisimple distributive monoidal categories and monoidal functors preserving direct sums.

\begin{prp}
\cite{Kri02}(\cite{GOG} Section 2.3.) Given the category $C$ of twisted modules of a holomorphic VOA $V$ (for which the regularity conjecture holds) with simple objects $T_g$, there is a $\mathbb{C}^{\times}$-valued cocycle $\alpha$ on the automorphism group $G$ of $V$ such that the biproduct-preserving functor $C\rightarrow Vect_\alpha$ given by $T_g\mapsto g_\alpha$ on simple objects is an equivalence that preserves the monoidal product. 
\end{prp}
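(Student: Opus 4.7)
The plan is to assemble the statement from three ingredients: (i) that $C$ is a $G$-pointed fusion category, (ii) that its Picard 2-group coincides with the skeletal 2-group $G_\alpha$ of Example~\ref{Galpha} for some explicit 3-cocycle $\alpha$, and (iii) that the assignment $T_g\mapsto g_\alpha$ extends to a monoidal equivalence with $\mathrm{Vect}_\alpha$.

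First, I would establish the fusion structure. By the cited result of Dong--Li--Mason, for each $g\in G$ the block $C_g$ of $g$-twisted modules is equivalent to $\mathrm{Vect}_{\mathbb{C}}$, so it has a unique simple object $T_g$, whose endomorphism ring is $\mathbb{C}$, and no nonzero morphisms exist between simples in different blocks. The regularity of $V^G$ (granted by the hypothesis that the regularity conjecture holds) yields that $C=\bigoplus_{g\in G} C_g$ is semisimple with only the simples $\{T_g\}_{g\in G}$ up to isomorphism, hence a linear semisimple category with finitely many (when $G$ is finite) isomorphism classes of simples, unit $T_e$, and $\mathrm{End}(T_e)=\mathbb{C}$. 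Rigidity of $C$ (with dual $T_g^\ast\cong T_{g^{-1}}$) follows from the intertwining-operator/contragredient-module construction for twisted modules. Thus $C$ is a fusion category, and I would verify the fusion rule $T_g\otimes T_h\cong T_{gh}$ by noting that the fusion product of a $g$-twisted and an $h$-twisted simple is $gh$-twisted (a consequence of the compatibility of the monodromy of the intertwining operator with the $G$-grading) and simple (since $C_{gh}\simeq\mathrm{Vect}_{\mathbb{C}}$ is rank one). In particular every simple of $C$ is invertible, so $C$ is $G$-pointed.

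Second, I would apply the reduction of Section~\ref{Mono} to the Picard 2-group. By construction $\mathrm{Pic}(C)=C$, and after choosing a skeleton in which the unitors become trivial (using the theorem that every monoidal category is equivalent to a skeletal one with trivial unitors), Theorem~\ref{Ass} produces an element $\alpha\in H^3(G,\mathbb{C}^\times)$ whose representing 3-cocycle implements the associator on the skeleton, uniquely up to the action of $\mathrm{Out}(G)$ on cohomology. Choosing any such representative defines $\alpha$.

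Third, I would define the comparison functor $F\colon C\to \mathrm{Vect}_\alpha$ on simples by $F(T_g):=\mathbb{K}_g=g_\alpha$ and extend it by additivity using the semisimple decomposition of any object of $C$ as a biproduct of $T_g$'s; on morphisms $F$ acts as the identity on each endomorphism algebra $\mathbb{C}$. This is clearly biproduct-preserving, fully faithful on simples (both sides have $\mathbb{C}$ endomorphisms and no morphisms between non-isomorphic simples), and essentially surjective since every $G$-graded vector space is a biproduct of the $\mathbb{K}_g$. To upgrade $F$ to a monoidal equivalence, I would equip it with the evident tensorator coming from the isomorphisms $F(T_g)\otimes F(T_h)=\mathbb{K}_{gh}=F(T_g\otimes T_h)$ and observe that, by the choice of $\alpha$, the associators of $C$ and of $\mathrm{Vect}_\alpha$ are transported to one another; the coherence (pentagon) then holds on simples by definition and extends to all objects by biproduct-preservation.

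The main obstacle is the second half of the first step: arguing that the fusion product of simple twisted modules is again a simple twisted module with the correct grading, so that $C$ is genuinely $G$-pointed fusion. The semisimplicity and rank-one-per-block statements depend essentially on the regularity conjecture via Huang--Lepowsky--Zhang tensor theory and the Dong--Li--Mason rationality results; once this structural input is in place, the remainder is formal from Theorem~\ref{Ass} and the definition of $\mathrm{Vect}_\alpha$.
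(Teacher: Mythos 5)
Your proposal follows essentially the same route as the paper's proof: establish that $C$ is a $G$-pointed fusion category (the paper defers this entirely to \cite{Kri02}, where you sketch the block decomposition, rigidity, and fusion rule), pass to a skeletal model with trivial unitors, extract the cocycle $\alpha$ via Theorem \ref{Ass}, and let distributivity/biproduct-preservation determine the equivalence with $\mathrm{Vect}_\alpha$. The only difference is that you supply the structural details the paper outsources to its references, which is a faithful expansion rather than a different argument.
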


\begin{proof}
The tensor product on $C$ is described in \cite{Kri02}, and it follows from the results there that $C$ is a $G$-pointed fusion category. This can be made into a skeletal fusion category with trivial unitors. The invertible objects of this category form a skeletal 2-group whose associator is, by theorem \ref{Ass} given by a 3-cocycle $\alpha$ on its truncation $G$. Since $C$ is fusion, thus in particular distributive, this fixes the tensor product on $C$.
\end{proof}
    
This powerful result gives us a clear idea of the structure of $C$. In particular, we obtain a monoidal embedding $\iota:G_\alpha\hookrightarrow C:g_\alpha\mapsto T_g$ corresponding to the embedding of the group of units of a ring.

\section{Deloopings and $(n,m)$-categories}
Given any monoidal category, we can define its delooping:

\begin{dfn}
The delooping of a pointed object $M$, in the sense of definition \ref{pointed}, in an $(n,m)$-category with a terminal object is a connected object $B M$, such that the pullback of the embedding $*\rightarrow B M$ of the terminal object along itself is equivalent to $M$ in the sense of higher category theory.
\end{dfn}

$(n,m)$ is to be understood here as in higher category theory, i.e. a category with morphisms of order $m$, such that all morphisms of higher order than $n$ are invertible. $(0,m)$-categories are also called $m$-groupoids. Formally defining an $(n,m)$-category, or even an $m$-groupoid, is very hard, since it requires the formulation of coherence conditions that quickly become exceedingly complex. Thus higher categories are often modeled using (directed) geometric models that contain the coherence conditions in their geometry. We will see one such approach, using simplicial sets to model $\infty$-groupoids, in the next chapter.

\begin{xmp}
The delooping for a group $G$ is a $1$-object groupoid $BG$ with morphisms labeled by the elements $g\in G$ and composition given by the group operation. Please note that the normal definition $f\circ g:=fg$ results in an order reversal when morphisms are written in diagrammatic order.
\end{xmp}

\begin{xmp}
The delooping of a monoidal category $M$ is given by a $1$-object $2$-category $B M$ with $1$-morphisms labeled by objects $A\in M_0$, composition of 1-morphisms given by the monoidal product $A\otimes B$, 2-morphisms given by morphisms $f,g\in M_1$, where horizontal composition for $a,b\in M(f,g)$ is given by composition in $M$ and vertical composition for $a\in M(f,g),b\in M(g,h)$ given by $b;a=a\otimes b$. Note again that the order is reversed in diagrammatic notation. In particular, the associator becomes a morphism $f;(g;h)=(f;g);h$.
\end{xmp}

So the delooping of a $k$-connected monoidal $(n,m)$-category is a $(k+1)$-connected $(n+1,m+1)$-category, where the monoidal product is translated into composition of 1-morphisms. In particular, the delooping of a monoidal $(n_1,m_1)$-category can only exist in an $(n_2,m_2)$-category if $n_2>n_1+1$ and $m_2>m_1+1$. For instance, the delooping of a group is a groupoid, and as such part of the $(1,2)$-category of groupoids, while the delooping of a monoidal category is a 2-category and as such part of the 3-category of 2-categories. However, 3-categories require far more machinery than 2-categories. Our goal is to instead use $\infty$-groupoids.

\begin{prp}\label{EHprp}
The delooping of a group $G$ admits a monoidal structure if and only if the group is abelian.
\end{prp}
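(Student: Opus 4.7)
The plan is to recognise this as an instance of the classical Eckmann--Hilton argument. A monoidal structure on $BG$ would equip the unique endomorphism set $G = BG(*,*)$ with a second binary operation $\otimes$ on top of the composition $\cdot$ coming from the group multiplication; since the monoidal unit must be the unique object $*$, its identity morphism for $\otimes$ is forced to coincide with the identity $1_G$ for $\cdot$.

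First I would invoke Mac Lane coherence (equivalently, the strictification part of the theorem on skeletal monoidal categories cited above) to reduce to the case of a strict monoidal structure, so that $\otimes$ is an honest bifunctor $BG \times BG \to BG$. The functoriality of $\otimes$ with respect to $\cdot$ is precisely the interchange law
\[
(g_{1} \cdot g_{2}) \otimes (h_{1} \cdot h_{2}) \;=\; (g_{1} \otimes h_{1}) \cdot (g_{2} \otimes h_{2}).
\]
Next I would specialise: setting $g_{2} = h_{1} = 1_G$ and using the unit laws $1_G \otimes x = x = x \otimes 1_G$ gives $g_{1} \otimes h_{2} = g_{1} \cdot h_{2}$, while setting $g_{1} = h_{2} = 1_G$ gives $g_{2} \otimes h_{1} = h_{1} \cdot g_{2}$. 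Combining these two identifications shows both that $\otimes$ agrees with $\cdot$ and that $g \cdot h = h \cdot g$ for all $g,h \in G$, so $G$ is abelian.

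For the converse, if $G$ is abelian I would simply take $\otimes = \cdot$ on $BG$ with trivial associator and unitors; the pentagon and triangle identities become trivialities and the interchange law reduces to commutativity of $G$, which now holds by hypothesis. The only step that is not purely formal is the reduction to a strict monoidal structure, but this is routine given the coherence results already cited in the text; the genuine mathematical content, and the only real obstacle, is the Eckmann--Hilton interchange that collapses two compatible unital binary operations on the same set into a single commutative one.
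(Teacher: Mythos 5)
Your proof is correct and takes essentially the same route as the paper, which likewise reduces the statement to the Eckmann--Hilton argument applied to composition and the monoidal product on the endomorphisms of the unique object. In fact you go further than the paper's proof, which only addresses the ``only if'' direction: you spell out the interchange law explicitly and also supply the converse by taking $\otimes$ equal to the group multiplication when $G$ is abelian.
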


\begin{proof}
Given a monoidal structure on the delooping $BG$, then the isomorphisms on the unique object can be multiplied both by composition and the monoidal product. Thus it follows from the Eckmann-Hilton argument (Theorem 5.4.2 in \cite{EH}) that $G$ is abelian.
\end{proof}

Now, given a 2-group $G_\alpha$ with associator $\alpha:G\rightarrow A$, the automorphism group of each object of $G_\alpha$ is isomorphic to $A$, as noted in \ref{Ass}. Thus we obtain a monoidal embedding $\kappa:B A\rightarrow G_\alpha$ given by mapping the unique object of $B A$ to the identity of $G_\alpha$ and the morphisms of $BA$, which are labeled by the objects of $A$, to the corresponding morphisms of $1_\alpha$. Taking the delooping $B\kappa$ of this embedding, we see that it is equivalent to the embedding of the fiber of the 1-truncation of $BG_\alpha$, and that the image of this truncation $\tau_1 G_\alpha$ is equivalent to $BG$. It then follows from the universal property of truncations that the sequence $B A\rightarrow G_\alpha\rightarrow G$ is a short exact sequence of 2-groups. This is actually a special instance of the homotopy fiber sequences of homotopy types which, by the homotopy hypothesis, are equivalent to $\infty$-groupoids. \\

From the structure of deloopings we can readily see that the generalization from groups to monoids corresponds to the generalization from groupoids to categories. Of course, groups are much less complex than monoids. This discrepancy deepens as the level of categorification is increased. In particular, the theory of 3-categories requires often page-sized diagrams and there are serious hurdles to developing a $3$-topos theory, while the theory of $\infty$-groupoids can be understood as the internal theory of $\infty-Gpd$ (and more refined structures, such as smooth $\infty$-groupoids can be handled in the context of $(\infty,1)$-topos theory). Thus by reducing the twisted representation theory of VOAs to their $2$-group of simple twisted modules, we can use the tools of $\infty-Gpd$, which has a well-developed cohomology theory.

\section{Simplicial Sets}
Homotopy theory replaces a topological space by its homotopy type, which can be defined as its equivalence class under homotopy equivalence. However, for this definition to work on a categorical level, it has to be extended to morphisms, which cannot be done in the most straightforward way. The reason for this is that the morphisms on a topological space are not generally dependent on the homotopy class of that space, not even if taken up to homeomorphism. The resulting categorical apparatus culminates in $(\infty,1)$-topos theory. To be able to effectively calculate with the notions of this theory back to traditional mathematics, first the information of a topological space has to be reduced into a more combinatorial form. We will do this using simplices.

\begin{dfn}
The \emph{simplex category} $\Delta$ is the category consisting of the (unique up to unique isomorphism) nonempty finite total orders $\Delta_n$ on $n+1$ elements, and order-preserving functions. 
\end{dfn}

The motivation of this definition is completely geometric: the total order $\Delta_n$ is to be thought of as the $n$-simplex with its vertices numbered from $0$ to $n$. Any morphism in $\Delta$ can be decomposed into the \emph{coface maps} $d_i:\Delta_{n-1}\rightarrow \Delta_n$ for ${0\leq i \leq n}$, uniquely mapping the $(n-1)$-simplex to the face of the $n$-simplex that is opposite to its $i$-th vertex, and the \emph{codegeneracy maps} $p_{0\leq i \leq n}:\Delta_{n+1}\rightarrow \Delta_n$, uniquely mapping the $(n+1)$-simplex to the $n$ simplex by identifying the $i$-th and $(i+1)$-th vertex. Simplices are extremely useful by allowing us to model a variety of things. They mainly do this through simplicial sets.

\begin{dfn}
A \emph{simplicial set} is a contravariant functor $S:\Delta^{op}\rightarrow Set$ from the simplex category to the category of sets. More generally, a \emph{simplicial object} is a contravariant functor on the simplex category.
\end{dfn}

Simplicial sets are the presheaves on the simplex category, and as such follow the general logic of presheaves as generalized spaces: due to the Yoneda lemma, the set $S(\Delta_n)$ is equivalent to the set $Hom_{Set^{\Delta{op}}}(\Delta_n, S)$, where $\Delta_n$ is interpreted as a simplicial set through the Yoneda embedding and the equivalence is due to the Yoneda lemma, and should be thought of as the $n$-simplices contained in $S$, and the face and degeneracy maps $\delta_i:=S(d_i)\cong Hom_{Set^{\Delta{op}}}(d_i, S)$ and $\pi_i:=S(p_i)\cong Hom_{Set^{\Delta{op}}}(p_i, S)$ , given by precomposition with the coface and codegeneracy maps, map the set of $n$-simplices to their $i$-th faces and the degenerated $(n+1)$-simplices built by taking the $i$-th vertex twice. This description can be formalized by replacing every $n$-simplex of a simplicial set $S$, given as an object of the set $S(\Delta_n)$, with the topological $n$ simplex, with the face and degeneracy maps given in the straightforward way. This is known as the \emph{geometric realization} $|S|$ of $S$. The process can also be turned around, and topological spaces can be made into simplicial sets:

\begin{dfn}
Let $\Delta_n$ be the topological $n$-simplex, and let the simplex category $\Delta$ be embedded in the category of topological spaces by the functor $\Delta_n\rightarrow\Delta_n$ with obvious face and degeneracy maps. Then, for every topological space $X$, let the $\emph{singular simplicial complex}$ $\Delta_\bullet(X)$ be the simplicial set given in degree $n$ by $Hom_{Top}(\Delta_n, X)$, with face and degeneracy maps given by precomposition with the coface and codegeneracy maps of $\Delta$.
\end{dfn}

So $\Delta_\bullet(X)$ is the simplicial set obtained by filling up $X$ with simplices, so that it models $X$, and its geometric realization consists just of the (topological) simplices in $X$, glued wherever their face maps map to the same object. Most importantly, it is a model for the homotopy type of $X$. To see this, we have to introduce some elementary definitions of simplicial sets. Note that these definitions implicitly use that the presheaf category $Set^{\Delta^{op}}$ affords the complete internal logic of a topos, so that in particular unions and complements of subobjects can be formed.

\begin{dfn}
The \emph{boundary} $\partial \Delta_n$ of $\Delta_n$ is formed by the union of the $n$ face subsimplices of dimension $n-1$. The \emph{i-th horn} $\Lambda^i_n$ of $\Delta_n$ is then obtained from $\partial\Delta_n$ by removing the $i$-th face, or equivalently, the union of all face subsimplices except the $i$-th one.
\end{dfn}

\begin{thm}
Given a topological space $X$, the geometric realization of the singular simplicial complex of $X$ is homotopy-equivalent to $X$.
\end{thm}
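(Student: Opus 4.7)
The plan is to identify the stated map with the counit $\varepsilon_X : |\Delta_\bullet(X)| \to X$ of the adjunction $|-| \dashv \Delta_\bullet$, given explicitly by sending a point $[\sigma, t]$, with $\sigma : \Delta_n \to X$ a singular $n$-simplex and $t$ a point of the topological $n$-simplex, to $\sigma(t) \in X$. Assuming ``homotopy-equivalent'' is meant in the weak sense (which is the best one can hope for unless $X$ already has the homotopy type of a CW complex), the task reduces to showing that $\varepsilon_X$ induces isomorphisms $\pi_k(|\Delta_\bullet(X)|, x) \xrightarrow{\sim} \pi_k(X, \varepsilon_X(x))$ for every $k\geq 0$ and every basepoint $x$.

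For surjectivity I would use the fact that $S^k$ admits a triangulation by finitely many simplices. A continuous map $f : S^k \to X$ then restricts to a coherent family of singular simplices $\sigma_i : \Delta_k \to X$, each of which is by definition a vertex of $\Delta_\bullet(X)$; these glue to a map $\tilde f : S^k \to |\Delta_\bullet(X)|$ with $\varepsilon_X \circ \tilde f = f$ by construction, so every class in $\pi_k(X)$ lifts tautologically.

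For injectivity the key ingredients are barycentric subdivision of simplicial sets and the simplicial approximation theorem. Given $g_0, g_1 : S^k \to |\Delta_\bullet(X)|$ and a homotopy $H : S^k \times I \to X$ between $\varepsilon_X \circ g_0$ and $\varepsilon_X \circ g_1$, one chooses a sufficiently fine subdivision of $S^k \times I$ so that $H$ is simplicial on it, lifts $H$ cell-by-cell by using the simplices themselves as their own tautological lifts in $\Delta_\bullet(X)$, and thereby produces a homotopy in $|\Delta_\bullet(X)|$. That subdivision does not change the homotopy type of the realization is itself a standard but non-trivial statement, proved by exhibiting an explicit deformation retract between the subdivided and original realizations.

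The main obstacle is the injectivity step: one must arrange the subdivision and simplicial approximation so that the lifted homotopy actually extends the \emph{given} lifts $g_0, g_1$ on the boundary $S^k \times \partial I$, rather than some subdivided replacement of them. This is handled by refining $g_0, g_1$ in tandem with $H$ and then collapsing the refinement via the retraction just mentioned, giving a homotopy rel boundary in the original $|\Delta_\bullet(X)|$. Finally, to promote the conclusion from a weak to an honest homotopy equivalence as literally phrased, I would invoke Whitehead's theorem, using that $|\Delta_\bullet(X)|$ carries a canonical CW structure with one cell per non-degenerate simplex of $\Delta_\bullet(X)$; this upgrade is automatic whenever $X$ itself has the homotopy type of a CW complex, which is the natural hypothesis to impose on $X$ for the statement as written.
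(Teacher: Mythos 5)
Your overall strategy---show that the counit $\varepsilon_X:|\Delta_\bullet(X)|\to X$ induces isomorphisms on all homotopy groups and then conclude via Whitehead---is the same endgame as the paper's, which disposes of the whole theorem in two sentences ($|\partial\Delta_n|$ is a sphere, hence the homotopy groups agree, hence Whitehead). You fill in considerably more detail, and you are right to flag that the statement as literally phrased only holds weakly unless $X$ has the homotopy type of a CW complex; the paper silently ignores this. Your surjectivity argument is sound: a triangulation of $S^k$ turns $f$ into a simplicial map into $\Delta_\bullet(X)$ whose realization lifts $f$ on the nose.

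The gap is in the injectivity step. First, the simplicial approximation theorem cannot be applied to $H:S^k\times I\to X$, because $X$ is an arbitrary topological space rather than the realization of a simplicial complex; what you actually want there is the same tautological-lift device as in your surjectivity step (triangulate $S^k\times I$ and push each simplex of the triangulation through $H$), which requires no approximation at all. Second, and more seriously, the tautological lift of $H$ restricts on $S^k\times\{0\}$ to the tautological lift of $\varepsilon_X\circ g_0$, not to $g_0$ itself; asserting that these two maps into $|\Delta_\bullet(X)|$ are homotopic is essentially the injectivity statement you are trying to prove, so the argument as written is circular at exactly the point you identify as ``the main obstacle,'' and a deformation retraction between subdivided and unsubdivided realizations does not resolve it. The standard repair is an induction over skeleta establishing a relative lifting property: given a map $\partial\Delta^n\to|\Delta_\bullet(X)|$ together with an extension of its projection over $\Delta^n$, one produces a compatible extension upstairs. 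This is the content of Milnor's theorem that $\pi_*(|K|)\cong\pi_*(K)$ for a Kan complex $K$, combined with the tautological identification $\pi_*(\Delta_\bullet(X))\cong\pi_*(X)$ (see May, \emph{Simplicial Objects in Algebraic Topology}, \S 16). With that lemma in hand your outline closes up; without it, both your proof and the paper's remain sketches of the same standard argument.
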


\begin{proof}
The geometric realization of $\partial\Delta_n$ is homotopy-equivalent to the $n$-sphere. Thus $X$ and $|\Delta_\bullet (X)|$ have the same homotopy groups, thus they are homotopy equivalent by the Whitehead theorem.
\end{proof}

To see why simplicial sets are a convenient model for calculations with $\infty$-groupoids, we have to introduce the notion of a Kan complex.

\begin{dfn}
A \emph{Kan complex} $K$ is a simplicial set fulfilling the \emph{horn filling condition}: any horn embedding $\Lambda^i_n\hookrightarrow K$ can be extended to a simplex embedding $\Delta_n\hookrightarrow K$.
\end{dfn}
Kan complexes give a combinatorial model of both $\infty$-groupoids and homotopy types. We have seen how they model homotopy types using the singular simplicial complex, to see how they model $\infty$-groupoids, we will use a similar construction. Every order induces a category with the same objects and a morphism from $a$ to $b$ iff $a\leq b$. Thus the simplical category $\Delta$ is a subcategory of the $2$-category of categories $Cat$ and therefore also of the $3$-category of 2-categories $Cat_2$. 

\begin{dfn}
The \emph{Duskin nerve} $N(C)$ of a $2$-category $C$ is the simplicial set given in degree $n$ by $Hom_{Cat_2}(\Delta_n, C)$, with face and degeneracy maps given by precomposition.
\end{dfn}

\begin{xmp}\label{Dusk}
Using proposition 5.4.12 of \cite{2N}, we see that the $0$-simplices of $N(C)$ are simply the points of $C$, the $1$-simplices are morphisms $f$, the 2-simplices are natural transformations $\theta:f;g\rightarrow h$, where $f$ and $g$ form the zeroth and second face of the $2$-simplex and $h$ the first, $3$-simplices are given by quadruples of commuting natural transformations $\theta_0,\theta_1,\theta_2,\theta_3$ and all higher structure is induced. The case we are interested in is that of a skeletal semigroup with strict units $G_\alpha$, so that any natural transformation is an automorphism $a\in A$, so that any 2-simplex has the form $a:f;g\rightarrow f;g$ and any 3-simplex $\Delta_3$ has edges $\Delta^{01}_3=f,\Delta^{12}_3=g,\Delta^{23}_3=h,\Delta^{02}_3=f;g,\Delta^{13}_3=g;h,\Delta^{03}_3=f;g;h$ and the commutation condition can be written as an equation $\theta_0\theta_1=\alpha\theta_2\theta_3$. \\
\end{xmp}

The nerve of a 2-category $C$ is generally not a Kan complex, but it is if $C$ is a 2-group. Then the horn filling condition, restricted to the inner horn $\Lambda^1_2\hookrightarrow K$ of the $2$-simplex, is an equivalent to the categorical condition that every pair of morphisms $f,g$ with $src(g)=tar(f)$ can be composed, but without imposing a uniqueness condition on the composition, as any filling of the horn can be interpreted as a composite. On the outer horns $\Lambda^0_2\hookrightarrow K, \Lambda^2_2\hookrightarrow K$, the horn filling condition means that a pair of morphisms $f,g$ with the same source or target can \emph{also} be composed, in the sense that a morphism $h$ exists such that $f;h=g$. In particular, if $g$ is the degenerated $1$-simplex on $src(f)$, which exists thanks to the degeneracy morphisms of a simplicial set, then $h$ is an inverse of $f$. Thus the inner horn lifting conditions of a Kan complex are generalizations of the composability to arbitrary dimensions, and the outer horn lifting conditions are generalizations of the existence condition on inverses, both expressed in a geometric (or combinatorial) language. So the Duskin nerve of a 2-group is a Kan complex, as is the singular simplicial complex of a topological space. Thus Kan complexes inhabit a sweet spot between algebra, combinatorics and topology, and can be used to show the homotopy hypothesis, that an $\infty$-groupoid is the same as a homotopy type. However, to provide a model for homotopy, the category of simplicial sets needs homotopy equivalences. We will import these from the category of topological spaces:

\begin{dfn}
A morphism between simplicial sets is a \emph{weak (Quillen) equivalence} if its geometric realization is a homotopy equivalence.
\end{dfn}

\begin{dfn}
A \emph{category with weak equivalences} is a category $C$ equipped with a set $W$ of morphisms $f\in Mor(C)$, such that all isomorphisms are in $W$ and $W$ fulfills the two-out-of-three condition: for all $f,g\in Mor(C)$, if any two of the three $f,g$ and $f;g$ are in $W$, then the third is in $W$ too.
\end{dfn}

\begin{xmp}
Homotopy equivalences form a category with weak equivalences, as do weak Quillen equivalences.
\end{xmp}

Weak equivalences are additional 1-categorical structure that describes intrinsic $(\infty,1)$-categorical structure. More precisely, each category with weak equivalences $C$ describes a unique $(\infty,1)$-category $\bar{C}$, which can be obtained from $C$ by Hammock localization. Basically, the morphisms between two objects $X,Y\in\bar{C}$ form an $\infty$-groupoid whose objects are sequences of morphisms $(f_1,w_1, f_2, w_2,...,w_{n-1}, f_n)$ in $C$, such that $src(f_1)=X, tar(f_n)=Y$, all $w_i$ are weak equivalences, and $src(f_i)=tar(w_i)$, so basically "morphisms up to weak equivalence". However, since the $(\infty,1)$-category we are trying to define \emph{is} the category of $\infty$-groupoids, we would run into definitional difficulties if we were trying to define it through Hammock localization. We will instead not formally define $\infty-Gpd$, since we will not need it directly, and point the reader to \cite{HTT} for a clean definition. But we want to cite one particular example of $\infty$-topos theory, which serves as motivation for the main result.

\begin{prp} \label{lastprp}
An extension of $\infty$-group objects $K\rightarrow G \rightarrow H$ in an $(\infty, 1)$-topos $\mathcal{T}$ gives rise to a homotopy-exact sequence $K\rightarrow G \rightarrow H \xrightarrow{c} BK \rightarrow BG \rightarrow BH$. If $K$ is abelian\footnote{Or, more generally, braided}, then $BK$ is also an $\infty$-group object and we can extend this homotopy-exact sequence one step to the right to obtain a morphism $BH\xrightarrow{Bc} B^2K$ and the $\infty$-groupoid of $\infty$-group extensions of $H$ by $K$ is equivalent to the $\infty$-groupoid $\mathcal{T}(BH, B^2K)$.
\end{prp}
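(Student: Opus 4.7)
The plan is to reduce everything to standard properties of fiber sequences in the $(\infty,1)$-topos $\mathcal{T}$, exploiting the equivalence between $\infty$-group objects and pointed connected objects via the delooping functor $B$. An extension $K\rightarrow G\rightarrow H$ corresponds by definition (equivalently, via the loop/delooping adjunction) to a fiber sequence $BK\rightarrow BG\rightarrow BH$ of pointed connected objects in $\mathcal{T}$. In any $(\infty,1)$-topos, such a fiber sequence $F\rightarrow E\rightarrow B$ extends canonically to the left by $\Omega B\rightarrow F\rightarrow E\rightarrow B$: the fiber of $F\rightarrow E$ is the iterated homotopy pullback $\ast\times_E F\simeq \ast\times_E(\ast\times_B E)\simeq \ast\times_B\ast\simeq \Omega B$. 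Iterating this twice, starting from $BK\rightarrow BG\rightarrow BH$, yields the homotopy-exact sequence
\[ K\rightarrow G\rightarrow H\xrightarrow{c} BK\rightarrow BG\rightarrow BH, \]
where $c$ is the connecting morphism.

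Assume next that $K$ is abelian. Proposition \ref{EHprp} then endows $BK$ with an $\infty$-group structure, so $BK$ admits a further delooping $B^2K$, and the map $BG\rightarrow BH$ can be interpreted as the homotopy quotient of $BG$ by a principal $BK$-action. Its classifying map $BH\rightarrow B^2K$ extends the original fiber sequence one step further to the right as $BG\rightarrow BH\rightarrow B^2K$; looping once gives the claimed $Bc\colon BH\rightarrow B^2K$.

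For the classification, I would invoke the general principle, valid in any $(\infty,1)$-topos, that principal $A$-bundles over an object $X$ are classified as an $\infty$-groupoid by the mapping space $\mathcal{T}(X,BA)$, the correspondence sending $f\colon X\rightarrow BA$ to the homotopy pullback $\ast\times_{BA} X$. Applied with $A=BK$ and $X=BH$, this identifies principal $BK$-bundles over $BH$ with $\mathcal{T}(BH,B^2K)$. For any $f\colon BH\rightarrow B^2K$, the total space $E:=\ast\times_{B^2K} BH$ is pointed and, by the long exact sequence of homotopy groups (using that $BH$ is connected and $B^2K$ is $1$-connected as a double delooping), also connected; hence $E\simeq BG$ for a unique $\infty$-group $G$ fitting in an extension $K\rightarrow G\rightarrow H$. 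Naturality of this construction upgrades the bijection on equivalence classes to the desired equivalence of $\infty$-groupoids.

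The main obstacle is not an explicit calculation but rather legitimately invoking the classification of principal $\infty$-bundles in this level of generality, which rests on nontrivial material from \cite{HTT} (and subsequent refinements by Nikolaus--Schreiber--Stevenson). A secondary technical point is tracking basepoints and naturality carefully enough that the iterated extensions of the fiber sequence produce $c$ and $Bc$ canonically and functorially; this is formally a consequence of the universal properties of pullbacks in $\mathcal{T}$, but requires some attention to the coherences intrinsic to the $(\infty,1)$-categorical setting.
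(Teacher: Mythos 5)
The paper offers no proof of its own here---it simply points to chapter 5.1.18 of \cite{dcct}---and your sketch reconstructs essentially the argument given in that reference: deloop the extension to a fiber sequence $BK\rightarrow BG\rightarrow BH$, extend it to the left by iterated homotopy fibers using $\mathrm{fib}(F\rightarrow E)\simeq \Omega B$, and identify extensions with principal $BK$-bundles over $BH$, classified by $\mathcal{T}(BH,B^2K)$ with automatically connected (hence deloopable) total space. The only two points deserving slightly more care, which you partly acknowledge, are that delooping the extension into a fiber sequence uses that $G\rightarrow H$ is an effective epimorphism (this is part of the definition of extension, not automatic for an arbitrary fiber sequence of $\infty$-groups), and that the $\infty$-group structure on $BK$ requires $K$ to be braided ($E_2$), which is genuinely more data than the $1$-categorical Eckmann--Hilton statement of Proposition \ref{EHprp} that you invoke.
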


\begin{proof}
See chapter $5.1.18$ in \cite{dcct}.
\end{proof}

\section{Main Result}
Due to Proposition \ref{lastprp}, the two-fold delooping $B^2K$ of an abelian $\infty$-group $K$ is the classifying space of extensions by $K$ in the sense that every extension of an $\infty$-group $H$ by $K$ is the loop space of the homotopy fiber of a morphism $BH\rightarrow B^2K$. Our goal is to show that the 2-group extension $G_\alpha$ of a group $G$ by a delooped abelian group $BA$ with associator a 3-cocycle $\alpha$\footnote{See the paragraph after the the proof of Proposition \ref{EHprp}.} represents the $\infty$-group extension classified by the morphism $\alpha:BG\rightarrow B^3A$. For this we need to find a presentation of the homotopy fiber of $\alpha$. This boils down to two constructions: first we need to construct a simplicial group representing the $\infty$-group $B^2A$, then we need to construct the simplicial classifying space of $B^2A$ along with its decalage. For the first construction we will start with the chain complex $A[2]$, which we know represents the delooping $B^2A$ in the category of abelian chain complexes. This category is equivalent to the category of simplicial abelian groups via the \emph{Dold-Kan correspondence}. We will not give the general formula of the Dold-Kan nerve $\Gamma$, which can be found in \cite{DK}, but take a detailed look at the case of our chain complex $A[2]$, which is concentrated in degree 2. This nerve is given in simplicial degree $n$ by

$$\Gamma(A[2])_{n}=\bigoplus_{\Delta_n\rightarrow [2]_{surj}} A$$.

 For us, only the first three degrees are relevant. So $\Gamma(A[2])$ is concentrated in degrees $0$ and $1$, consists of one copy of $A$ in degree $2$ and three copies $A_0$, $A_1$, $A_2$ in degree $3$, corresponding to the three degeneracy maps $\sigma_{0}, \sigma_{1}, \sigma_{2}$ between degree 3 and 2. On each factor $A_j$ of the biproduct, the face maps simplify to 

$$\delta_i|_{A_j}\rightarrow A=\begin{cases}
id & \text{if } i=j, j+1 \\
* & \text{otherwise}
\end{cases}$$

Given now any simplicial group $G$, the \emph{simplicial classifying space} $\bar{W}(G)$ can be constructed with its \emph{decalage} $dec:W(G)\rightarrow \bar{W}(G)$, which is a morphism representing the canonical basepoint inclusion of the delooping $BG$, so that the homotopy fiber of the morphism $BG\rightarrow B^3A$, which represents our cocycle $\alpha$, is in turn represented by the pullback of $\alpha$ along $dec$. Again, we do not give the general procedure, which is somewhat tedious, but can be found in Chapter 5 of \cite{Sp} and simplifies in our case. Since degrees 0 and 1 of $\Gamma(A[2])$ are trivial, the general formula for $W(B^2 A)$ simplifies to $W(B^2 A)_{2}\cong\Gamma(A[2])_2=A$ and $W(B^2 A)_3\cong\Gamma(A[2])_3\times \Gamma(A[2])_2\cong A_0\times A_1\times A_2 \times A_3$ for objects of degree 2 and 3. Furthermore, since the face maps from degree 2 to degree 1 of $\Gamma(A[2])$ are all trivial, the face maps between $\bar{W}(B^2 A)_3$ and $\bar{W}(B^2 A)_2$ are given by $\delta_0(a,b,c,d)=\delta_0(a,b,c)d=ad$, $\delta_1(a,b,c,d)=\delta_1(a,b,c)=ab$, $\delta_2(a,b,c,d)=\delta_2(a,b,c)=bc$ and $\delta_3(a,b,c,d)=\delta_3(a,b,c)=c$. $\bar{W}(B^2 A)$ finally is given in degree 2 by the quotient of $A$ with itself, thus trivial, and in degree 3 by $\Gamma(A[2])_3/\Gamma(A[2])_3\times \Gamma(A[2])_2$, thus isomorphic to $A$, and the decalage morphism $dec$ is (equivalent to) the projection onto the fourth factor. Thus, for a 3-cocycle $\alpha:\bar{W}(G)\rightarrow \bar{W}(B^2 A)$, the pullback of $\alpha$ along $dec$ consists of one 0-cell, the objects of $G$ as 1-cells, for each $f,g\in G,a\in A$, 2-cells 
$$
\begin{tikzcd}[column sep=small]
& |[alias=U]|*  \arrow[dr, "f"]  & \\
* \arrow[ur, "g"] \arrow["fg"]{rr}[name=D]{} & & *
\arrow[Rightarrow, from=U, to=D, "a"]
\end{tikzcd}
$$

and, for $\alpha(f,g,h)=d$ and $a,b,c\in A$, 3-cells with edges labeled by $\Delta^{01}_3=f,\Delta^{12}_3=g,\Delta^{23}_3=h,\Delta^{02}_3=f;g,\Delta^{13}_3=g;h,\Delta^{03}_3=f;g;h$ and faces labeled by $\delta_0=ad$, $\delta_1=ab$, $\delta_2=bc$ and $\delta_3=c$. Remembering now the Construction \ref{Dusk} of the Duskin nerve, we see that $N(G_\alpha)$ has the same 0,1 and 2-simplices, and an isomorphism of $3$-simplices can be given by the map $\theta_3\mapsto c, \theta_2\mapsto ab, \theta_1\mapsto bc, \theta_0\mapsto ad$. Plugging in the variables, we see that indeed $\theta_0\theta_1=adbc=\alpha\theta_2\theta_3$. Thus, we have proved

\begin{thm}\label{main}
Given a 3-cocycle $\alpha:G^3\rightarrow A$, the 2-group extension of $G$ by $BA$ with associator $\alpha$ is equivalent to the $\infty$-group extension of $G$ classified by the morphism $BG\rightarrow B^3A$ represented by $\alpha$. 
\end{thm}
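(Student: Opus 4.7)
The plan is to present both sides of the claimed equivalence as explicit simplicial sets and match them in low dimensions. The Duskin nerve $N(G_\alpha)$ is a 2-truncated Kan complex whose 0-, 1-, 2-, and 3-simplices are spelled out in Example \ref{Dusk}, so it suffices to exhibit a simplicial model of the homotopy fiber of the classifying map $\alpha\colon BG\to B^3A$ that agrees with $N(G_\alpha)$ through dimension 3; the agreement will then propagate automatically to an equivalence of 2-truncated Kan complexes.

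First I would model $B^2A$ via Dold-Kan applied to the chain complex $A[2]$. Because $A[2]$ is concentrated in degree 2, the simplicial abelian group $\Gamma(A[2])$ is trivial in degrees 0 and 1, is a single copy of $A$ in degree 2, and splits as three copies of $A$ in degree 3 indexed by the degeneracies $\sigma_0,\sigma_1,\sigma_2$, with face maps acting as identity or zero according to whether the face index matches the degeneracy. Assembling the simplicial classifying space $\bar{W}(B^2A)$ together with its decalage $dec\colon W(B^2A)\to\bar{W}(B^2A)$ following Chapter 5 of \cite{Sp}, the vanishing in low degrees collapses the general construction: $\bar{W}(B^2A)$ is trivial through degree 2 and is a copy of $A$ in degree 3, while $W(B^2A)$ in degree 3 is $A^4$ and $dec$ is the projection onto the last factor. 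These are routine calculations I would write out but not dwell on.

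Next I would form the strict pullback of $\alpha\colon\bar{W}(BG)\to\bar{W}(B^2A)$ along $dec$, which presents the desired homotopy fiber because $dec$ models the canonical basepoint inclusion as a fibration. Reading the resulting simplicial set in dimensions $\leq 3$, the 0- and 1-simplices are $*$ and elements of $G$, the 2-simplices over a composable pair $(f,g)$ are labelled by a free parameter $a\in A$, and the 3-simplices over a composable triple $(f,g,h)$ with $\alpha(f,g,h)=d$ are parametrized by triples $(a,b,c)\in A^3$ whose four $\bar{W}$-faces evaluate to $ad,\ ab,\ bc,\ c$. The comparison with $N(G_\alpha)$ is then the evident assignment $\theta_0\mapsto ad,\ \theta_1\mapsto bc,\ \theta_2\mapsto ab,\ \theta_3\mapsto c$; using abelianness of $A$ (guaranteed by Theorem \ref{Ass}) one checks $adbc=dabc$, so the Duskin cocycle condition $\theta_0\theta_1=\alpha(f,g,h)\theta_2\theta_3$ from Example \ref{Dusk} is automatic and the assignment is a bijection on 3-simplices.

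The main obstacle I anticipate is combinatorial rather than conceptual: one has to keep the face and degeneracy conventions aligned consistently through Dold-Kan, $\bar{W}$, decalage and pullback, so that the 3-cocycle $\alpha$ appears with the correct orientation in the face equations and the bijection with $N(G_\alpha)$ emerges without any sign or reindexing corrections. Once the identification is pinned down in dimension 3, the Kan condition forces the higher-dimensional data on both sides and the identification extends uniquely to a weak equivalence of Kan complexes, which by the homotopy hypothesis is an equivalence of the underlying $\infty$-groupoids.
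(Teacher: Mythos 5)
Your proposal follows essentially the same route as the paper: Dold--Kan applied to $A[2]$, the simplicial classifying space $\bar{W}(B^2A)$ with its decalage collapsing to the projection onto the last factor of $A^4$ in degree 3, the pullback along $dec$, and the identification with the Duskin nerve $N(G_\alpha)$ via the assignment $\theta_0\mapsto ad$, $\theta_1\mapsto bc$, $\theta_2\mapsto ab$, $\theta_3\mapsto c$ using abelianness of $A$. The only additions are your explicit remarks on 2-truncatedness and on propagating the low-dimensional agreement via the Kan condition, which the paper leaves implicit.
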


\section{Conclusion}
As illustrated in Section \ref{Mono}, the category of twisted modules of a holomorphic VOA (to which the regularity conjecture applies) can be reconstructed from its automorphism group and a $3$-cohomology cocycle, or, equivalently, the $2$-group-extension it classifies. Our main result has made this construction fully explicit in the context of the simplicial model category. Moreover, it has shown that this explicit construction is a presentation of a general result from $(\infty, 1)$-topos theory. It is natural to expect that more of the theory of VOAs and their automorphism groups could find a natural framework in the theory of cohesive $(\infty, 1)$-topos theory. Importing this result already opens up the use of the internal cohomology theory of an $(\infty, 1)$-topos, which is a natural environment for cohomology. \\

Unlike the associator, the classifying morphism it presents is invariant under categorical equivalence and exists in every $(\infty,1)$-topos. This is important since a satisfactory theory of VOAs within the framework of $(\infty,1)$-topoi can be expected to require a refinement of $\infty$-groupoids that allows the formulation of differential cohesion, such as the $(\infty,1)$-topos of formally smooth $\infty$-groupoids. \\

It is not yet understood to what degree the cocycle $\alpha$ is characteristic to its VOA or its automorphism group. This question can be split in two: how many VOAs can have the same automorphism group, and can two different VOAs be Morita equivalent?  The second part of the question is taken up at the end of \cite{GOG}, where it is conjectured that two VOAs are Morita equivalent if and only if they have the same central charge. \\

It is also not understood what the position of the cocycle says about the VOA. For instance, it is known that the cocycle $\alpha_\mathbb{M}$ of the monster module has order 24 and it is conjectured that it generates its cohomology group and that a VOA can only be defined over the integers if its cocycle has order 24 \cite{F}. Thus, information about the cohomology group $H^3 (G, \mathbb{C}^\times)$ of a group $G$ can be used to gain information about VOAs that have it as an automorphism group without having to construct the VOA. This might be particularly useful for existence theorems of $p$-adic VOAs. Given the current interest in defining VOAs over the integers and the appearance of Moonshine phenomena over the adics \cite{adic}, the question of extensions over $p$-adic fields seems like a natural next step. In a sense, the appearance of a $\mathbb{C}^\times$-valued cocycle underlines the particularity of the underlying definitions, as it transfers the theory into the realm of algebra, and algebraic theories in characteristic 0 usually have analogs for positive characteristics. So far, the abstraction from VOAs to their categories of modules seems unhelpful in capturing Moonshine, but studying adic aspects of the theory might provide finer insight. Here again, a transfer to the $(\infty,1)$-categorical setting might be helpful, as it contains Morava $K$-theories, graduations of $\infty$-fields in characteristic $p$ with associated cohomology theories \cite{Mora}. \\

As is often the case with Moonshine, mysterious numerical identities start appearing everywhere. The central charge of the Monster VOA is 24 and so is the order of its cocycle. The weights of other VOAs used in Moonshine are also close to 24 in some way, though those ways differ (for instance the central charge of the shorter Moonshine module is given by 23.5, while the central charge of Duncan's VOA for $Co1$ in \cite{Col} is 12), and it is conjectured, and partially proven, that the Monster is the unique holomorphic VOA with central charge 24 and $dim(V)_1$ \cite{Irr1} \cite{lam2006characterization}. Similarly, the 3-cohomology of most sporadic simple groups for which it was calculated revolves around 24, but of those calculated, only the monster actually has cohomology 24. \\

Most of this is dependent on the regularity conjecture which will hopefully in due time be proven for all groups. More generally, the connection to Moonshine, consisting in the modularity of characters of modules, gets lost when focussing on the category of modules of a VOA alone. To find the categorical formulation of Moonshine, an external formulation of the character of a VOA-module has to be found, which has to include more data than the category of modules.

\appendix
\section{Category-theoretic notions}
We will need some category theory for our results, and will provide here the used definitions and results without providing proofs, which mostly would require additional vocabulary we won't need. As a good introductory book, we recommend Leinster's ``basic Category Theory''\cite{Lein}.

\begin{dfn}\label{cat}
A \emph{category} $C$ consists of two (large) sets\footnote{Category theory often requires classes and sometimes collections that are larger than classes. This issue is usually resolved with Grothendieck universes, set-theoretic universes that are nested in each other.} $C_0,C_1$ of \emph{objects} and \emph{morphisms}, functions $src,tar:C_1\rightarrow C_0$, $id:C_0\rightarrow C_1$ and a function $comp$, which maps each $f,g$ with $tar(f)=src(g)$ to an element $f;g$, such that $src(f;g)=src(f)$, $tar(f;g)=tar(g)$, $(f;g);h=f;(g;h)$, $id(x);f=f$ and $f;id(y)=f$ for all morphisms that can be composed. \\

A \emph{functor} between two categories $F:C\rightarrow D$ consists of a pair of functions $F_0:C_0\rightarrow D_0$, $F_1:C_1\rightarrow D_1$, such that these functions commute with the source, target, identity and composition morphisms. \\
\end{dfn}

This is one of two possible definitions, the other starts with a set of objects and, for each two objects $x,y$, a set of morphisms $hom(x,y)$, then formulates the above conditions in these terms. The two definitions can readily be derived from each other, though their generalizations in higher category theory can be different. An annoying issue with these definitions is that their optimal formulation would use category-theoretic notions. In particular, composition can best be described using pullbacks:

\begin{dfn}
Given a diagram 
$$
\begin{tikzcd}[column sep=small]
 & X  \arrow[d, "f"] \\ 
Y \arrow[r, "g"] & Z
\end{tikzcd}
$$

the \emph{pullback} of the diagram (or of $f$ along $g$ or $g$ along $f$) is a commutative diagram

$$
\begin{tikzcd}[column sep=small]
Z \arrow[r] \arrow[d]  & X  \arrow[d, "f"] \\ 
Y \arrow[r, "g"] & Z
\end{tikzcd}
$$

that is universal in the sense that for every other commutative diagram

$$
\begin{tikzcd}[column sep=small]
A \arrow[r] \arrow[d]  & X  \arrow[d, "f"] \\ 
Y \arrow[r, "g"] & Z
\end{tikzcd}
$$

an arrow $\iota:A\rightarrow Z$ exists such that the composite diagram made of the two rectangles and $\iota$ commutes.
\end{dfn}

\begin{xmp}
In the category $Set$ of sets and functions, a pullback of a function $f:X\rightarrow Y$ along a point $\ast\rightarrow Y$ is the fiber of $f$ over $\ast$. All other pullbacks can be computed pointwise.
\end{xmp}

In particular, $comp$ in Definition \ref{cat} is a function from the pullback of $tar$ along $src$ to $C_1$. \\

Somewhat relatedly, we use pointed objects:

\begin{dfn}\label{pointed}
Given a category $C$ with a terminal object $\ast$, the \emph{category of pointed objects} $C^{\ast \backslash}$ has as objects the arrows $\ast\rightarrow X$ with source the terminal object, and as morphisms commuting triangles

$$
\begin{tikzcd}
X\arrow[r, "f"] & Y \\
\ast \arrow[u] \arrow[ur] \\
\end{tikzcd}
$$
\end{dfn}

Generally, the terminal object of a category are understood as a point. In particular, in the category of sets, the terminal object is the singleton set, and a pointed object in the category of sets is a pointed set. \\

We are mostly interested in monoidal categories or presheaf categories:

\begin{dfn}
Given two functors $F,G:C\Rightarrow D$, a \emph{natural transformation} $\iota:F\Rightarrow G$ is a family of morphisms $\iota_X:F(X)\rightarrow G(X)$, such that, for each $f:X\rightarrow Y$, the diagram

$$
\begin{tikzcd}[column sep=small]
F(X) \arrow[r,"\iota_X"] \arrow[d,"F(f)"] & G(X) \arrow[d,"G(f)"] \\
F(Y) \arrow[r,"\iota_Y"] & G(Y) \\
\end{tikzcd}
$$

commutes. \\

Given any two categories $C,D$, the \emph{functor category} $D^{C}$ is the category of functors between $C$ and $D$ and natural transformations between them.\footnote{Due to size issues, the functor category often lives in a higher Grothendieck universe than $C$ and $D$.} A \emph{natural isomorphism} is a natural transformation in which each component morphism is invertible. \\

Given any category $C$, the \emph{presheaf category} of $C$ is the functor category $Set^{C^{op}}$, where $C^{op}$ is the category with the same objects as $C$ and all morphisms reversed.
\end{dfn}

One of the more subtle issues in category theory is that of equivalence, which is basically ``isomorphism of categories up to isomorphism'':

\begin{dfn}
A \emph{natural isomorphism} $\iota$ is a natural transformation in which each component morphism of $\iota$ is an isomorphism. \\

Two functors $F,G$ are \emph{equivalent} if there is a natural isomorphism between them. \\

An equivalence between two categories $C,D$ is a pair of functors $F:C\Rightarrow D$, $G:D\Rightarrow C$ and a pair of natural isomorphisms $\lambda:F;G\rightarrow id_{C}$, $\rho:id_{D}\rightarrow G;F$.
\end{dfn}

The fundamental importance of presheaf categories comes from the Yoneda lemma.

\begin{dfn}
A presheaf $F:C^{op}\rightarrow Set$ is \emph{representable} if it is equivalent to a functor of the form $hom(-, X)$ for some object $X$ in $C$.
\end{dfn}

The \emph{Yoneda embedding} $hom:C\hookrightarrow Set^{C^{op}}$ is fully faithful. In fact, a lot more is true:

\begin{lemma}[Yoneda lemma]
For every presheaf $F:C^{op}\rightarrow Set$, there is a canonical isomorphism $Hom_{Set^{C^{op}}}hom(-, X)\cong F(X)$
\end{lemma}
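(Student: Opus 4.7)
The plan is to exhibit an explicit bijection between the set of natural transformations $\eta: \mathrm{hom}(-,X) \Rightarrow F$ and the set $F(X)$, then observe that this bijection is canonical (natural in both $X$ and $F$). The guiding intuition, which drives every step, is that a natural transformation out of a representable presheaf is rigidly determined by where it sends the single element $\mathrm{id}_X \in \mathrm{hom}(X,X)$, and conversely any element of $F(X)$ freely specifies such a transformation through the functorial action of $F$.

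Concretely, I would first define the evaluation map $\Phi: \mathrm{Nat}(\mathrm{hom}(-,X), F) \to F(X)$ by $\Phi(\eta) := \eta_X(\mathrm{id}_X)$. Next I would construct the proposed inverse $\Psi: F(X) \to \mathrm{Nat}(\mathrm{hom}(-,X), F)$ via the formula $\Psi(a)_Y(f) := F(f)(a)$ for each $f \in \mathrm{hom}(Y,X)$, and verify that $\Psi(a)$ is genuinely natural. This naturality reduces to checking that, for $g: Y' \to Y$, both routes around the relevant square send $f \in \mathrm{hom}(Y,X)$ to $F(g)(F(f)(a)) = F(f \circ g)(a)$, which is immediate from the contravariant functoriality of $F$.

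The technical heart of the argument, and what I expect to be the main obstacle (though it is not truly deep), is showing that $\Phi$ and $\Psi$ are mutually inverse. The direction $\Phi \circ \Psi = \mathrm{id}$ is painless: $\Phi(\Psi(a)) = F(\mathrm{id}_X)(a) = a$ by preservation of identities. The direction $\Psi \circ \Phi = \mathrm{id}$ is the substantive step: given $\eta$ and any $f: Y \to X$, one must show $\eta_Y(f) = F(f)(\eta_X(\mathrm{id}_X))$. This is exactly the naturality square of $\eta$ applied to the morphism $f: Y \to X$ and evaluated at $\mathrm{id}_X \in \mathrm{hom}(X,X)$: precomposition with $f$ sends $\mathrm{id}_X$ to $f$, so chasing around both ways yields the desired identity. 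The whole proof therefore concentrates into this single diagram chase.

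Finally, I would verify canonicity by checking naturality of the bijection in $F$ (immediate upon unwinding the definitions of $\Phi$ and $\Psi$ and postcomposing with a natural transformation $F \Rightarrow F'$) and in $X$ (a routine diagram chase using that the Yoneda embedding sends $h: X \to X'$ to the transformation $h \circ -$). The whole argument thus compresses into one conceptual insight — that naturality propagates the value at $\mathrm{id}_X$ uniquely to every component — surrounded by straightforward bookkeeping.
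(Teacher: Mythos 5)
Your proof is correct and is the standard argument: the bijection given by evaluation at $\mathrm{id}_X$, its inverse via the functorial action of $F$, the diagram chase showing $\eta_Y(f)=F(f)(\eta_X(\mathrm{id}_X))$, and the naturality checks in $F$ and $X$ are all sound. The paper itself offers no proof to compare against---its appendix explicitly states results ``without providing proofs'' and defers to Leinster's textbook, where essentially your argument appears---so your proposal simply supplies the canonical proof the paper omits.
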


The Yoneda lemma has good claim to being the most important result in category theory. At a very basic level it says that the structure of an object $X$ can be determined by through mappings into other objects with the same kind of structure. In the geometric case, where it is most often used, the category $C$ is most often a category of simple geometric objects. The presheaves on $C$ should then be thought of as generalized spaces that are \emph{characterized precisely} by how the objects $X$ of $C$ map into them, where $X$ is identified with its Yoneda embedding $hom_C(-,X)$. We take a closer look at the presheaf category on the simplex category in the main text. Other examples inlcude the category of affine schemes and the cube category. \\

Presheaf categories have many pleasant properties which we lack the vocabulary to list and which make them very similar to the category of sets itself. We will only use the fact that they have cartesian products and direct sums, which are formed componentwise (so $(F+G)(X)=F(X)+G(X)$ and similar for products). \\

This covers the range of 1-category theory we are using. However, we also need some concepts from higher category theory. Similar to how sets can be understood as categories with without non-identity morphisms, categories can be understood as special instances of more sophisticated structures called $n$-categories. We only give some heuristics here for an intuitive understanding of $n$-categories (a rigorous definition for arbitrary $n$ is still a work in progress). An $n$-category $C$ can inductively be understood as a set $C_0$ equipped with, for each two objects $X,Y\in C_0$, an $(n-1)$-category $hom(X,Y)$ and, for each three objects $X,Y,Z$, composition $(n-1)$-functors $hom(X,Y)\times hom(Y,Z)\Rightarrow hom(X,Z)$, and some structure that defines unitarity (that composition of an object $X$ with a unit object is equivalent to $X$) and associativity of composition. 

\begin{xmp}
Categories, functors and natural transformations form a 2-category $Cat$.
\end{xmp}

Defining this rigorously is very hard in particular due to issues of coherence: as generally category-theoretic notions should only hold up to equivalence, and so should the associativity and unitarity of $n$-categories, and similar to how the right notion of equivalence between categories is not isomorphism but ``isomorphism up to isomorphism'', the right notion of equivalence between higher categories becomes more and more involved. Tracing this recursion downwards leaves a net of increasingly obtuse structure, the first taste of which can be seen in the pentagon identity for the associator of a monoidal category. For 1-categories the issue doesn't exist since its $hom$-objects are 0-categories (also known as sets), in which equivalence is equal to identity. For 2-categories a coherence theorem states that each 2-category can be strictified into a 2-category where composition and unitarity hold on the nose. The same however fails already to hold in the case of 3-categories and 4-categories are borderline unworkable if described explicitly. However, there exists a way to greatly decrease coherence issues by first throwing away all non-invertible structure and taking the limit of the iteration: an $\infty$-category is (or should be) a set with $hom$-objects that are themselves $\infty$-categories and an $\infty$-groupoid is an $\infty$-category such that all (higher) morphisms are invertible. Composition and unitarity should then only hold up to $\infty$-categorical equivalence, which itself cannot be reduced to equality. The recursion doesn't have an endpoint anymore. Defining this rigorously might seem even harder, and it is very hard, but the difficulty can be alleviated by modeling the algebraic notions of composition and identity through geometry. In particular, if an equivalence between two objects $x,y$ is understood as a line between the two points $x$ and $y$ and an equivalence between two equivalences $i,j$ between the same two objects $x,y$ as a surface and so on, then the theory of $\infty$-groupoids can be seen to be equivalent to homotopy theory. This is the content of the \emph{homotopy hypothesis}. There are various ways to state the homotopy hypothesis, but the general idea is that the algebraic definition of an $\infty$-groupoid as an $\infty$-category in which all morphisms are invertible is equivalent to the homotopy type of a topological space. It has become less of a hypothesis that has to be proven and more of a criterion for definitions: a correct definition of $\infty$-groupoids is one for which the homotopy hypothesis holds. We present a geometric model for $\infty$-groupoids in Chapter 7. \\

Once $\infty$-groupoids are defined, the equivalences in the definitions of higher categories can be described by taking recurse in the notion of equivalence of $\infty$-groupoids. This program, which was originally devised in Grothendieck's groundbreaking manuscript ``Pursuing Stacks'' \cite{PStacks}, has taken major strides in Lurie's work and is currently developed in category-theoretic circles. However, that undertaking lies outside the scope of the current work.

\addcontentsline{toc}{section}{References}

% Stil festlegen: plain, alpha, unsrt, abbrv, ... 
\bibliographystyle{plainurl}

\bibliography{literatur}

\begin{thebibliography}{10}

\bibitem{Nlab}
URL: \url{https://ncatlab.org/nlab/show/HomePage}.

\bibitem{C2}
T.~Abe, G.~Buhl, and C.~Dong.
\newblock Rationality, regularity, and \emph{$C_2-Cofiniteness$}, 2002.
\newblock \href {http://arxiv.org/abs/math/0204021}
  {\path{arXiv:math/0204021}}.

\bibitem{Mora}
Vigleik Angeltveit.
\newblock Uniqueness of moravak-theory.
\newblock {\em Compositio Mathematica}, 147(2):633–648, Sep 2010.
\newblock URL: \url{http://dx.doi.org/10.1112/S0010437X10005026}, \href
  {https://doi.org/10.1112/s0010437x10005026}
  {\path{doi:10.1112/s0010437x10005026}}.

\bibitem{FSG}
Daniel~Gorenstein (auth.).
\newblock {\em The Classification of Finite Simple Groups: Volume 1: Groups of
  Noncharacteristics 2 Type}, page~2.
\newblock University Series in Mathematics. Springer US, 1 edition, 1983.

\bibitem{CM}
Scott Carnahan and Masahiko Miyamoto.
\newblock Regularity of fixed-point vertex operator subalgebras, 2018.
\newblock \href {http://arxiv.org/abs/1603.05645} {\path{arXiv:1603.05645}}.

\bibitem{adic}
Ryan~C. Chen, Samuel Marks, and Matthew Tyler.
\newblock p-adic properties of hauptmoduln with applications to moonshine.
\newblock {\em Symmetry, Integrability and Geometry: Methods and Applications},
  Apr 2019.
\newblock URL: \url{http://dx.doi.org/10.3842/SIGMA.2019.033}, \href
  {https://doi.org/10.3842/sigma.2019.033} {\path{doi:10.3842/sigma.2019.033}}.

\bibitem{Irr1}
Robert L. Griess~Jr. Chongying~Dong and Ching~Hung Lam.
\newblock Uniqueness results for the moonshine vertex operator algebra.
\newblock {\em American Journal of Mathematics}, 129:583--609, 2007.

\bibitem{Cor}
David Corfield.
\newblock Mathematical kinds.
\newblock URL:
  \url{https://golem.ph.utexas.edu/category/2006/09/mathematical_kinds.html}.

\bibitem{DLM}
Chongying Dong, Haisheng Li, and Geoffrey Mason.
\newblock Modular-invariance of trace functions¶in orbifold theory and
  generalized moonshine.
\newblock {\em Communications in Mathematical Physics}, 214(1):1–56, Oct
  2000.
\newblock URL: \url{http://dx.doi.org/10.1007/s002200000242}, \href
  {https://doi.org/10.1007/s002200000242} {\path{doi:10.1007/s002200000242}}.

\bibitem{Col}
John~F. Duncan.
\newblock Super-moonshine for conway's largest sporadic group, 2006.
\newblock \href {http://arxiv.org/abs/math/0502267}
  {\path{arXiv:math/0502267}}.

\bibitem{EH}
Beno Eckmann and Peter Hilton.
\newblock Group-like structures in general categories i. multiplications and
  comultiplications.
\newblock {\em Mathematische Annalen}, 145:227--255, 1962.
\newblock URL: \url{http://eudml.org/doc/160908}.

\bibitem{MBM}
Terry Gannon.
\newblock {\em Moonshine Beyond the Monster; The Bridge Connecting Algebra,
  Modular Forms and Physics}.
\newblock Cambridge Monographs on Mathematical Physics. Cambridge University
  Press, 2006.

\bibitem{DK}
Paul Goerss and John~Frederick Jardine.
\newblock Simplicial homotopy theory, 2009.
\newblock URL:
  \url{http://web.archive.org/web/19990208220238/http://www.math.uwo.ca/~jardine/papers/simp-sets/}.

\bibitem{PStacks}
Alexander Grothendieck.
\newblock Pursuing stacks.
\newblock see \url{https://thescrivener.github.io/PursuingStacks} for a TEXed
  version.
\newblock URL: \url{https://www-users.cse.umn.edu/~messing/docs}.

\bibitem{2N}
Niles Johnson and Donald Yau.
\newblock 2-dimensional categories, 2020.
\newblock \href {http://arxiv.org/abs/2002.06055} {\path{arXiv:2002.06055}}.

\bibitem{F}
Theo Johnson-Freyd.
\newblock The moonshine anomaly.
\newblock {\em Communications in Mathematical Physics}, 365(3):943–970, Feb
  2019.
\newblock URL: \url{http://dx.doi.org/10.1007/s00220-019-03300-2}, \href
  {https://doi.org/10.1007/s00220-019-03300-2}
  {\path{doi:10.1007/s00220-019-03300-2}}.

\bibitem{GOG}
Theo Johnson-Freyd.
\newblock Galois action on voa gauge anomalies, 2020.
\newblock \href {http://arxiv.org/abs/1811.06495} {\path{arXiv:1811.06495}}.

\bibitem{Kri02}
Alexander Kirillov, Jr.
\newblock Modular categories and orbifold models.
\newblock {\em Communications in Mathematical Physics}, 229(2):309–335, Aug
  2002.
\newblock URL: \url{http://dx.doi.org/10.1007/s002200200650}, \href
  {https://doi.org/10.1007/s002200200650} {\path{doi:10.1007/s002200200650}}.

\bibitem{lam2006characterization}
Ching~Hung Lam and Hiroshi Yamauchi.
\newblock A characterization of the moonshine vertex operator algebra by means
  of virasoro frames, 2006.
\newblock \href {http://arxiv.org/abs/math/0609718}
  {\path{arXiv:math/0609718}}.

\bibitem{Lein}
Tom Leinster.
\newblock Basic category theory, 2016.
\newblock \href {http://arxiv.org/abs/1612.09375} {\path{arXiv:1612.09375}}.

\bibitem{HTT}
Jacob Lurie.
\newblock Higher topos theory, 2008.
\newblock \href {http://arxiv.org/abs/math/0608040}
  {\path{arXiv:math/0608040}}.

\bibitem{EGNO}
Dmitri~Nikshych Pavel~Etingof, Shlomo~Gelaki and Victor Ostrik.
\newblock {\em Tensor Categories}.
\newblock Mathematical Surveys and Monographs. AMS, 2015.

\bibitem{Mst}
Peter Schauenburg.
\newblock Turning monoidal categories into strict ones, 2001.
\newblock URL: \url{http://schauenburg.perso.math.cnrs.fr/papers/tmcso.pdf}.

\bibitem{dcct}
Urs Schreiber.
\newblock Differential cohomology in a cohesive topos.
\newblock URL: \url{https://ncatlab.org/schreiber/files/dcct170811.pdf}.

\bibitem{Sp}
Danny Stevenson.
\newblock D\'ecalage and kan's simplicial loop group functor, 2012.
\newblock \href {http://arxiv.org/abs/1112.0474} {\path{arXiv:1112.0474}}.

\bibitem{Zhu}
Yongchang Zhu.
\newblock Modular invariance of characters of vertex operator algebras, 1996.
\newblock URL:
  \url{https://www.ams.org/jams/1996-9-01/S0894-0347-96-00182-8/S0894-0347-96-00182-8.pdf},
  \href {https://doi.org/S0894-0347-96-00182-8}
  {\path{doi:S0894-0347-96-00182-8}}.

\end{thebibliography}

\end{document}